\documentclass[11pt]{amsart}

\usepackage{graphicx}
\usepackage{amsthm, amsmath, amssymb, tikz, bm}
\usepackage{mathtools, intcalc}
\usepackage{xifthen}
\usepackage[colorlinks=true,
linkcolor=blue,citecolor=blue,
urlcolor=blue]{hyperref}
\usepackage{subcaption}

\usetikzlibrary{arrows}
\usepackage{thmtools}
\usepackage{thm-restate}

\usepackage[margin=1in]{geometry}

\usepackage[shortlabels]{enumitem}
\usepackage{todonotes}
\usetikzlibrary{calc,shapes, backgrounds}
\allowdisplaybreaks

\usepackage[backend=biber,sorting=nyt]{biblatex}
\newtheorem{theorem}{Theorem}
\newtheorem{lemma}[theorem]{Lemma}
\newtheorem{corollary}[theorem]{Corollary}
\newtheorem{conjecture}[theorem]{Conjecture}
\newtheorem{proposition}[theorem]{Proposition}
\newtheorem{question}[theorem]{Question} 
\theoremstyle{definition}

\newcommand{\ds}{\displaystyle}
\newcommand{\dss}{\displaystyle\sum}

\newcommand{\cT}{\mathcal{T}}

\newcommand{\st}{\:\colon}

\usepackage[normalem]{ulem}

\usepackage{tikz}
 \pgfdeclarelayer{nodelayer}
 \pgfdeclarelayer{edgelayer}
 \pgfsetlayers{edgelayer,nodelayer,main}
 \usetikzlibrary{}
 \tikzstyle{Black Vertex}=[fill=black, draw=black, shape=circle]
  \tikzstyle{none}=[fill=none, draw=none, shape=circle]
  \tikzstyle{Green Edge}=[-, fill=none, draw=green]
\tikzstyle{Red Edge}=[-, fill=none, draw=red]
\tikzstyle{Blue Edge}=[-, draw=blue]
\tikzstyle{Orange Edge}=[-, draw={rgb,255: red,255; green,128; blue,0}]

\tikzstyle{Outline}=[fill=white, draw=black, shape=rectangle]

\tikzstyle{Black Edge}=[-]

\newcommand{\solidnodes}{\tikzstyle{every node}=
[draw,circle,fill=black,minimum size=4pt,inner sep=0pt]}

\usepackage{unicode-math}
\usepackage{dsfont}
\let\mathbb\mathds % fix font

\title{On some structural properties of graphs with non-negative resistance curvature}
\author[Agrahari]{Gyaneshwar Agrahari}
\address{Louisiana State University, Baton Rouge}
\email{gagrah1@lsu.edu}
\author[Bibby]{Christin Bibby} 
\address{Louisiana State University, Baton Rouge}
\email{bibby@lsu.edu}
\author[Boros]{Sean Boros}
\address{Louisiana State University, Baton Rouge}
\email{sboros1@lsu.edu}
\author[Garcia]{Hailey Garcia}
\address{Louisiana State University, Baton Rouge}
\email{jgarc86@lsu.edu}
\author[Heidercheidt]{Fernando Heidercheidt}
\address{Louisiana State University, Baton Rouge}
\email{fheide1@lsu.edu}
\author[Wang]{Zhiyu Wang}
\address{Louisiana State University, Baton Rouge}
\email{zhiyuw@lsu.edu}

\subjclass[2020]{Primary: 05C75; Secondary: 05C05, 05C42, 05C45, 52B05, 52B40}
\def\e{\mathbf e}
\def\x{\mathbf x}
\def\w{\mathbf w}
\def\y{\mathbf y}
\def\z{\mathbf z}
\def\b{\mathbf b}

\def\S{\mathcal S}
\DeclareMathOperator{\conv}{conv}

\begin{document}
\begin{abstract}
A graph is called resistance nonnegative (RN), respectively resistance positive (RP), if it admits positive edge weights such that all vertex resistance curvatures are nonnegative, respectively positive. In this paper, we study the structure of RN and RP graphs in relation to toughness, traceability, and Cartesian products. First, we disprove a conjecture of Fiedler and answer a question of Devriendt in the negative by constructing, for every $n\ge 11$, an $n$-vertex $1$-tough graph that is not RN. Second, we show that RP graphs need not be traceable by proving that the Thomassen $34$-graph is RP but not traceable. Finally, we resolve a conjecture of Devriendt on grid graphs by proving that all Cartesian products of paths are RN.
\end{abstract}

\maketitle
\vspace{-2em}

\section{Introduction}\label{sec:intro}
Ricci curvature plays an important role in the geometric analysis of Riemannian manifolds. In the smooth setting, Ricci curvature is obtained by tracing the sectional curvature, while scalar curvature is the trace of the Ricci tensor. Thus Ricci curvature records directional volume distortion, whereas scalar curvature assigns a single curvature value to each point. Many variants of Ricci curvature have been introduced in nonsmooth and discrete spaces, e.g., the Bakry--Emery lower Ricci curvature bound~\cite{Bakry-Emery1985}, Ollivier's coarse Ricci curvature on metric spaces~\cite{Ollivier}, and the synthetic lower Ricci curvature bounds of Lott--Villani and Sturm on metric measure spaces~\cite{Lott-Villani,Sturm}. In the setting of graphs, Chung and Yau introduced a notion of Ricci-flatness~\cite{Chung-Yau96}. Later, Lin and Yau~\cite{LY} studied lower Ricci curvature bounds for graphs, and Lin, Lu, and Yau~\cite{LLY} modified Ollivier's Ricci curvature to obtain a graph curvature independent of the idleness parameter of the random walk. Other important notions of graph curvature include Forman curvature~\cite{Forman2003}, Bakry--Emery curvature on graphs~\cite{Schmuckenschlaeger,FM-L}, and combinatorial curvature for planar graphs~\cite{Higuchi2001}. See, for example,~\cite{Cho-Paeng2013,CKKLP2021,Paeng2012, Lu-Wang2026} and the references therein for further connections between these curvature notions and graph-theoretic properties. 

Recently, Devriendt and Lambiotte introduced a new notion of discrete curvature based on effective resistance~\cite{Devriendt-Lambiotte2022}. Effective resistance originates from electrical network theory and random walks, and was introduced as a graph distance by Klein and Randić~\cite{Klein-Randic1993}; see also~\cite{Doyle-Snell1984,Grimmett2010}. Unlike shortest-path distance, effective resistance takes into account all paths connecting two vertices and therefore measures how well two vertices are connected through the whole graph. This global nature makes effective resistance a useful geometric quantity in graph theory, spectral graph theory, and network analysis.

Let $G=(V,E)$ be a finite simple connected graph. Given positive edge weights
$c=(c_e)_{e\in E}$, let $\mathcal T(G)$ denote the set of spanning trees of
$G$. For convenience, in this paper, given an edge $e\in E(G)$ and a subgraph $H$ of $G$, we use $e\in H$ to denote that $e\in E(H)$. For an edge $e=uv\in E$, the \emph{effective resistance} between $u$ and
$v$, denoted by $\omega_e(c)$, is defined as
\begin{equation}
   \omega_e(c) :=
  c_e^{-1} \cdot \frac{\ds\sum_{\substack{T\in \cT(G)\\ e\in T}} \prod_{h\in T}c_h}{\dss_{T\in \cT(G)} \prod_{h\in T}c_h}.
\end{equation}
The \emph{relative resistance} of $e$ is then defined by $r_e(c):=c_e\omega_e(c)$.
From a probabilistic perspective, $r_e(c)$ is the probability that $e$ is contained in a random spanning
tree sampled from the \emph{log-linear} distribution $\mu_c(T)=\frac{\prod_{e\in T}c_e}{\sum_{T'\in\mathcal T(G)}\prod_{e\in T'}c_e}$. Foster's Theorem~\cite{Foster1949}, a classical result in electrical circuit analysis, gives that
$\sum_{e\in E} r_e(c)=|V|-1$.
The \emph{resistance curvature} of a vertex $v\in V(G)$, denoted by $p_v(c)$, is defined by
\[
   p_v(c)=1-\frac12\sum_{e\ni v}r_e(c).
\]

Following Devriendt~\cite{Devriendt2025}, a graph $G$ is called
\emph{resistance nonnegative}, abbreviated RN, if there exists a choice of
positive edge weights $c$ such that $p_v(c)\ge 0$ for every $v\in V(G)$.
Similarly, $G$ is called \emph{resistance positive}, abbreviated RP, if there
exists a choice of positive edge weights $c$ such that $p_v(c)>0$ for every
$v\in V(G)$. A graph which is RN but not RP is called \emph{strictly resistance
nonnegative}, or SRN. 

In~\cite{Devriendt2025}, Devriendt characterized RN and RP graphs in terms
of distributions on spanning trees, as well as spanning tree and matching
polytopes. For a subset $F\subseteq E(G)$, let $\mathbf e_F\in
\mathbb{R}^{E(G)}$ denote its indicator vector, i.e., $(\mathbf e_F)_h = 1$ if $h\in F$ and $0$ otherwise.
The \emph{spanning tree
polytope} of $G$ is defined by $P(G):=\operatorname{conv}\{\mathbf e_T:T\in
\mathcal T(G)\}\subseteq \mathbb{R}^{E(G)},$ where $\mathcal T(G)$ is the set
of spanning trees of $G$. We write $P(G)^\circ$ for the relative interior of
$P(G)$ in its affine hull. Similarly, a \emph{matching} of $G$ is a set of
pairwise disjoint edges, and the \emph{matching polytope} of $G$ is defined by
$M(G):=\operatorname{conv}\{\mathbf e_M:M \text{ is a matching of }G\}
\subseteq \mathbb{R}^{E(G)}$. Let $2M(G):=\{2x:x\in M(G)\}$. 
A distribution on
$\mathcal T(G)$ is called \emph{positive} if every spanning tree receives positive
probability, and it is called \emph{non-separable} if its support is not contained in a
proper face of $P(G)$.
Devriendt~\cite{Devriendt2025} gave the following characterization of RN and RP graphs, among others.

\begin{theorem}[Devriendt~\cite{Devriendt2025}]
\label{thm:devriendt-characterization}
Let $G$ be a graph. Then the following statements hold.
\begin{enumerate}
   \item $G$ is RN, respectively RP, if and only if there exists a
   non-separable, log-linear, or positive distribution on $\mathcal T(G)$ such
   that the expected degree of every vertex in a random spanning tree under
   this distribution is at most $2$, respectively strictly smaller than $2$.
   Any one of these three types of distributions is sufficient, and all three
   types are necessary.

   \medskip

   \item $G$ is RN if and only if $P(G)^\circ\cap 2M(G)\neq \emptyset$.
\end{enumerate}
\end{theorem}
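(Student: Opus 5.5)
The plan is to pass between edge weights, spanning tree distributions, and the polytopes through the marginal vector $x\in\mathbb{R}^{E(G)}$ of a distribution $\mu$ on $\mathcal T(G)$, defined by $x_e=\mu(e\in T)$. Note first that $p_v(c)=1-\tfrac12\sum_{e\ni v}r_e(c)$ and that $\sum_{e\ni v}r_e(c)$ is the expected degree of $v$ in a $\mu_c$-random spanning tree. Hence, for the log-linear distribution $\mu_c$, the statement ``$p_v(c)\ge 0$ (resp. $>0$) for all $v$'' is literally the statement that every expected degree is at most $2$ (resp. strictly less than $2$). This immediately gives the equivalence in (1) between RN/RP and the \emph{log-linear} type. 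Since $\mu_c$ is positive, a log-linear distribution is in particular positive. A positive distribution has marginal $x=\sum_T\mu(T)\mathbf e_T$ with all coefficients positive, so $x$ lies in $P(G)^\circ$ and $\mu$ is non-separable. This settles the easy implications: log-linear $\Rightarrow$ positive $\Rightarrow$ non-separable. Necessity of all three types therefore follows once we close the loop non-separable $\Rightarrow$ log-linear.

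The central step is to show that every point $x\in P(G)^\circ$ is the marginal vector $(r_e(c))_e$ of some log-linear distribution $\mu_c$. For this I will use the maximum entropy / exponential family argument. Consider the map $\theta\mapsto \nabla\log Z(\theta)$, where $Z(\theta)=\sum_T e^{\langle\theta,\mathbf e_T\rangle}$ and $c_e=e^{\theta_e}$. This gradient equals the marginal vector of $\mu_c$. Because $\log Z$ is convex, and strictly convex modulo the lineality directions orthogonal to the affine hull of $P(G)$, the gradient map is a bijection from $\mathbb{R}^E$ modulo those directions onto the relative interior $P(G)^\circ$. Concretely, given $x\in P(G)^\circ$, I maximize $\langle\theta,x\rangle-\log Z(\theta)$ and argue that the maximum is attained. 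Coercivity holds exactly because $x$ lies strictly inside every facet in the affine hull: along any direction $\theta$ not constant on $P(G)$, we have $\langle\theta,x\rangle<\max_T\langle\theta,\mathbf e_T\rangle$, so the objective tends to $-\infty$. I expect this attainment/coercivity step to be the main obstacle, and I would handle it by restricting $\theta$ to the linear space parallel to $\operatorname{aff}P(G)$ and using compactness of its unit sphere. Now let $\mu$ be non-separable with expected degrees $\le 2$ (resp. $<2$). Its marginal $x$ lies in $P(G)^\circ$, since a point of $P(G)$ lies on a proper face precisely when all supporting trees do. Vertex expected degrees are linear in $x$, so the resulting $\mu_c$ has the same expected degrees, and (1) follows.

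For (2), I reduce to (1) via the standard description of $2M(G)$. For a point $x\in P(G)$, the RN condition asks that $\sum_{e\ni v}x_e\le 2$ for every $v$. I will show that $x\in P(G)^\circ$ satisfies these degree bounds if and only if $x\in 2M(G)$. The direction $2M(G)\subseteq\{y\ge 0:\sum_{e\ni v}y_e\le 2\}$ is immediate. For the converse I use Edmonds' description of $M(G)$, which adds the odd-set constraints $\sum_{e\subseteq S}y_e\le (|S|-1)/2$ for odd $S$. After scaling by $2$, these read $x(E[S])\le |S|-1$. Every point of $P(G)$ satisfies these automatically, since a spanning tree has at most $|S|-1$ edges inside $S$. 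Hence $P(G)\cap\{\deg\le 2\}=P(G)\cap 2M(G)$, and intersecting with $P(G)^\circ$ together with (1) gives (2).
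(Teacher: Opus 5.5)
The paper gives no proof of this theorem. It is quoted from Devriendt~\cite{Devriendt2025} and used as a black box, for instance in the proofs of Theorems~\ref{thm:dimension}, \ref{thm:RP-non-traceble} and \ref{thm:sprawling-RN}. Your proposal is a correct and self-contained proof.

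Its one nontrivial ingredient is that the log-linear marginal map $c\mapsto (r_e(c))_e$ is onto $P(G)^\circ$, and your maximum-entropy argument proves this. Let $L$ be the linear space parallel to $\operatorname{aff}P(G)$. For $\theta\in L$, using $\log Z(\theta)\ge\max_T\langle\theta,\mathbf e_T\rangle$, you get $\langle\theta,x\rangle-\log Z(\theta)\le\langle\theta,x\rangle-\max_T\langle\theta,\mathbf e_T\rangle\le-\delta\|\theta\|$. Here $\delta>0$ comes from compactness of the unit sphere of $L$ together with $x\in P(G)^\circ$. So the maximum over $L$ is attained.

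Part (2) is exactly the observation the paper records in Proposition~\ref{prop:tdm-char}. Points of $P(G)$ automatically satisfy the doubled odd-set inequalities of Lemma~\ref{lem:matching-polytope}, so $P(G)\cap 2M(G)=P(G)\cap\{x: x(E(v))\le 2 \text{ for all } v\}$.

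Compared with citing the result, your route gives two things:
\begin{itemize}
\item It shows the three distribution types are interchangeable because all of them are captured by the same set of marginal vectors, namely $P(G)^\circ$. Non-separability of $\mu$ is equivalent to its marginal lying in $P(G)^\circ$, as you argue via faces.
\item It never uses a facet description of $P(G)^\circ$. So, unlike Lemma~\ref{lem:treepoly-relint}, it needs no $2$-connectivity and covers graphs with bridges such as paths, where $P(G)$ is a single point.
\end{itemize}

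A few points to tighten:
\begin{itemize}
\item You have the direction labels reversed. Necessity of all three types (RN implies each type exists) is already immediate from your easy chain log-linear $\Rightarrow$ positive $\Rightarrow$ non-separable, applied to $\mu_c$. The hard step non-separable $\Rightarrow$ log-linear is what gives \emph{sufficiency} of the weaker types.
\item Maximizing only over $L$ gives that the projection of $x-\nabla\log Z(\theta^*)$ onto $L$ vanishes. To finish, note that $x$ and $\nabla\log Z(\theta^*)$ both lie in $\operatorname{aff}P(G)$, so their difference lies in $L$ and is therefore zero. Equivalently, the objective is invariant under translations by $L^\perp$.
\item Only surjectivity of the gradient map is used, so the bijectivity claim can be dropped.
\item State explicitly that $G$ is connected, so that $\mathcal T(G)\neq\emptyset$.
\end{itemize}
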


In this paper, we first give an alternative characterization of RP graphs in terms of the dimension of the \emph{tree double-matching polytope} $\Theta(G):=P(G)\cap 2M(G)$.

\begin{restatable}{theorem}{dimchar}\label{thm:dimension}
 Let $G=(V,E)$ be a connected graph with $|V|\ge 3$. Then $G$ is RP if and
only if $\dim \Theta(G)=|E|-1$.   
\end{restatable}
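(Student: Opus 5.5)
The plan is to prove both directions by comparing $\Theta(G)$ with the affine hyperplane $H:=\{x\in\mathbb{R}^{E}:\sum_{e}x_e=|V|-1\}$, which contains $P(G)$. Two standard facts will be used.

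First, the spanning tree polytope $P(G)$ is the base polytope of the graphic matroid. Its dimension is therefore $|E|-k$, where $k$ is the number of blocks of $G$ (bridges counted as blocks). Hence $\dim P(G)=|E|-1$ exactly when $G$ is $2$-connected, and in that case $\operatorname{aff}P(G)=H$.

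Second, Edmonds' description of the matching polytope gives
\[
2M(G)=\{x\ge 0:\ x(\delta(v))\le 2\ \forall v,\ x(E[S])\le |S|-1\ \forall S\subseteq V,\ |S| \text{ odd}\}.
\]
Since every $x\in P(G)$ already satisfies $x\ge 0$ and $x(E[S])\le |S|-1$ for all $S$, this yields
\[
\Theta(G)=P(G)\cap\{x: x(\delta(v))\le 2\ \forall v\}.
\]

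\emph{RP $\Rightarrow$ $\dim\Theta(G)=|E|-1$.} I first show that an RP graph with $|V|\ge 3$ is $2$-connected. Suppose $w$ is a cut vertex. The restriction of any spanning tree to a block is a spanning tree of that block, so $w$ has at least one incident tree edge in each of the (at least two) blocks containing it. Thus $\deg_T(w)\ge 2$ for every spanning tree $T$. Under any distribution the expected degree of $w$ is then at least $2$, contradicting Theorem~\ref{thm:devriendt-characterization}(1). So $G$ is $2$-connected and $\dim P(G)=|E|-1$.

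Now take a positive distribution on $\mathcal T(G)$ with all expected degrees $<2$, and let $x$ be its mean. Because every vertex $\mathbf e_T$ of $P(G)$ has positive weight, $x\in P(G)^\circ$, and $x(\delta(v))<2$ for every $v$. A small neighbourhood of $x$ in $H$ therefore lies in $P(G)$ and satisfies all degree inequalities strictly, so by the reformulation above it lies in $\Theta(G)$. Hence $\dim\Theta(G)\ge |E|-1$. Since $\Theta(G)\subseteq H$, we get equality.

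\emph{$\dim\Theta(G)=|E|-1$ $\Rightarrow$ RP.} Since $\Theta(G)\subseteq P(G)\subseteq H$, we get $\dim P(G)=|E|-1$, so $\operatorname{aff}\Theta(G)=\operatorname{aff}P(G)=H$. Pick $x$ in the relative interior of $\Theta(G)$. Then $x$ lies in the relative interior of $P(G)$ as well, because both sets span $H$ and $\Theta(G)\subseteq P(G)$.

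Next I claim $x(\delta(v))<2$ for every $v$. If $x(\delta(v))=2$, the valid inequality $x(\delta(v))\le 2$ would be tight at a relative interior point, hence tight on all of $\Theta(G)$. That would force $H\subseteq\{x(\delta(v))=2\}$, i.e.\ the functional $x\mapsto x(\delta(v))$ would be a multiple of $\sum_e x_e$ on $\mathbb{R}^E$, which means $\delta(v)=E$. This is impossible: a $2$-connected graph with $|V|\ge 3$ has an edge not incident to $v$ (otherwise it is a star, which has a cut vertex).

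Finally, a point of $P(G)^\circ$ can be written as a convex combination of all the $\mathbf e_T$ with strictly positive coefficients. To see this, mix a small multiple of the uniform barycenter with an expression of a nearby point of $P(G)$. This gives a positive distribution whose expected degrees are $x(\delta(v))<2$, and Theorem~\ref{thm:devriendt-characterization}(1) yields that $G$ is RP.

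\emph{Main obstacle.} The step needing the most care is the forward direction's passage from the mean $x$ to a full-dimensional neighbourhood inside $2M(G)$. This is exactly where Edmonds' odd-set constraints must be shown redundant on $P(G)$; if one wants to avoid citing Edmonds, this is the point that would need an independent argument. The $2$-connectivity argument via cut vertices of degree $\ge 2$ in every tree is the other key ingredient, and it is what rules out the degenerate case $\dim P(G)<|E|-1$.
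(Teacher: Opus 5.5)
Your proposal is correct and follows essentially the paper's route. You reduce $\Theta(G)$ to $P(G)$ cut by the vertex-degree inequalities, use that $P(G)^\circ$ is open in the hyperplane $\mathbf 1\cdot x=|V|-1$ for the forward direction, and for the converse show that no degree inequality can be tight at a relative-interior point of $\Theta(G)$, because some edge avoids $v$. The differences are minor: you actually prove that an RP graph with $|V|\ge 3$ is $2$-connected (a cut vertex has degree at least $2$ in every spanning tree), which the paper only asserts, and you phrase the tightness contradiction via $\delta(v)\neq E$ rather than the paper's explicit perturbation $x+\varepsilon\mathbf e_e-\varepsilon\mathbf e_f$.
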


A \emph{Hamiltonian path} or \emph{Hamiltonian cycle} of a graph is a path or cycle, respectively, that visits every vertex exactly once. A graph is called \emph{Hamiltonian} if it contains a Hamiltonian cycle. Devriendt~\cite{Devriendt2025} showed that the integer points of the polytope
   $\Theta(G):=P(G)\cap 2M(G)$ are precisely the indicator vectors of Hamiltonian paths of
   $G$. Theorem~\ref{thm:devriendt-characterization} connects resistance curvature
with matchings, spanning trees and Hamiltonian paths/cycles. In particular, it suggests that RN and
RP graphs are closely related to Hamiltonian-type properties, but the
relationship is subtle. 

Graph toughness was introduced by Chv\'atal in connection with Hamiltonian
cycles~\cite{Chvatal1973}. A graph $G$ is called \emph{$t$-tough} if for every vertex
set $S\subseteq V(G)$ whose removal disconnects $G$, the number of connected
components of $G-S$ is at most $|S|/t$. Chv\'atal~\cite{Chvatal1973} conjectured that there exists
a constant $t_0$ such that every $t_0$-tough graph is Hamiltonian. It is known
that $1$-toughness does not imply Hamiltonicity; see the survey
\cite{Bauer-Broersma-Schmeichel2006}.

In the context of resistance curvature, Devriendt~\cite{Devriendt2025} showed that every Hamiltonian
graph is RP. A result of Fiedler~\cite[Thm.~3.4.18]{Fiedler2011}
(see also~\cite[Thm.~6.31]{Devriendt2022}) implies that every connected
weighted graph with positive resistance curvature is $1$-tough. Thus we have
\[
   \{\text{Hamiltonian graphs}\}
   \subseteq
   \{\text{RP graphs}\}
   \subseteq
   \{\text{$1$-tough graphs}\}.
\]
The first containment is strict, as the Petersen graph is RP but non-Hamiltonian. This led Devriendt~\cite{Devriendt2025} to ask whether every $1$-tough graph is RP; this had been conjectured by Fiedler~\cite{Fiedler2011}.

\begin{question}\label{onetoughRP}~\cite[Question 6.6]{Devriendt2025} 
 Is every $1$-tough graph RP? 
\end{question}

Our second main result answers Question~\ref{onetoughRP} in the negative in a stronger form. 

\begin{restatable}{theorem}{onetoughNotRP}
\label{thm:1-tough-not-RP}
For every $n\geq 11$, there exists an $n$-vertex $1$-tough non-RN (thus also non-RP) graph.
\end{restatable}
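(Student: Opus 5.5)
We construct explicit $1$-tough graphs whose spanning tree distributions are forced to put expected degree above $2$ at some vertex, and we certify non-RN through Theorem~\ref{thm:devriendt-characterization}. Non-RN follows if we can exhibit a vector $y\in\mathbb{R}^{E(G)}$ with $\langle y, x\rangle \le \beta$ for all $x\in 2M(G)$ and $\langle y,x\rangle\ge\beta$ for all $x\in P(G)$, with strict inequality on $P(G)^\circ$. Such a $y$ separates $P(G)^\circ$ from $2M(G)$, so part (2) of the theorem gives non-RN. Equivalently, by part (1), it suffices to show that for every positive distribution on $\mathcal T(G)$ the expected degrees cannot all be at most $2$. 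The standard way to do this is a weighted degree-counting argument: find a vertex set $S$ and weights such that every spanning tree $T$ satisfies $\sum_{v\in S}\deg_T(v) \ge 2|S|+\varepsilon_T$, with $\varepsilon_T\ge 0$ and $\varepsilon_T>0$ for some tree. Averaging over a positive distribution then forces some vertex of $S$ to have expected degree greater than $2$.

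For the construction, we take an independent set $I$ and a set $S$ with $|I|=|S|+1$ or close to it, arranged so that removing $S$ leaves $I$ as many components but never more than $|S|$. This makes the graph $1$-tough but "tight". We exploit the fact that in any spanning tree, $\sum_{v\in I}\deg_T(v)\ge |I|$ and the edges of $T$ inside $G-I$ are limited. Concretely, let the vertices of $I$ have small degree into a clique $K$ (or a dense core), and count edges of $T$ between $I$ and the rest. A tree must contain at least $|I|$ such edges, plus enough edges to connect the core. Counting endpoints in $K$ gives $\sum_{v\in K}\deg_T(v)\ge |I|+2(|K|-1)$ minus corrections. If $|I|\ge |K|+2$, this exceeds $2|K|$, but that would violate toughness. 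So the construction must mix in a few extra vertices, each adjacent to part of $I$ and part of $K$, so that no cut set has too many components. At the same time, any tree must still overload the core.

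I would first search small cases around $n=11$, for example a known $1$-tough non-traceable-type gadget: a triangle-based construction with three "ears" joined at a small core, in the spirit of Bauer–Broersma–Veldman non-Hamiltonian $1$-tough graphs. I would then derive the separating inequality by hand or by solving the LP $\max\{\langle y,\cdot\rangle\}$ on $2M(G)$ versus $\min$ on $P(G)$. Next, I would extend to all $n\ge 11$ by replacing a clique vertex with a larger clique, or by adding vertices to the core clique. This preserves $1$-toughness, since enlarging a complete part only requires removing more vertices. It also preserves the counting inequality, since the extra clique vertices each contribute at least their share of degree.

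The main obstacle is verifying $1$-toughness for the whole family. This requires checking all separating sets, which I would handle by case analysis on how many core vertices the cut contains. Checking that the degree-sum inequality is strict on some tree, which gives strictness against the relative interior, should be easy by exhibiting one tree with a vertex of degree $3$ in $S$.
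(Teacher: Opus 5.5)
Your proposal is not yet a proof. You never specify a graph, a set $S$, or a weighted degree inequality. ``Search small cases and solve the LP'' is a plan rather than an argument, and the whole content of the theorem is an explicit $1$-tough family together with a certificate of non-RN.

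Several heuristics you rely on also fail as stated:
\begin{itemize}
\item The independent-set-versus-core picture is exactly what $1$-toughness forbids, as you note yourself, and you leave open how to escape it.
\item The justification of your extension step is wrong. A vertex added to a clique can be a leaf of $T$, so it need not ``contribute its share'' of degree~$2$. Adding a vertex adjacent to a clique that lies inside a tight cut creates an extra component, so $1$-toughness is not automatically preserved.
\item The triangle-based gadget you propose is unlikely to work. The triangle with three two-vertex ears to a common vertex (Figure~\ref{fig:ex:K3+}) is actually RP.
\end{itemize}

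The missing idea is a hub with many degree-$2$ neighbors. The paper takes $K_6$ on $v_0,\dots,v_5$ and subdivides each edge $v_0v_i$ exactly $s_i\ge 1$ times. Lengthening the ears gives every $n=6+\sum s_i\ge 11$, and this does not affect either part of the argument. For $1$-toughness, an inclusion-minimal violating cut contains no subdivision vertex and must contain $v_0$; such cuts leave at most $|S|$ components.

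For non-RN, let $x_i$ be the neighbor of $v_0$ on the $i$th ear. Since $\deg_G(x_i)=2$, the vertex $x_i$ is a leaf of $T$ unless $v_0x_i\in T$. Hence $\sum_i\deg_T(x_i)\le 5+\deg_T(v_0)$ for every spanning tree $T$. Every tree satisfies $\sum_v(2-\deg_T(v))=2$, so the total expected deficit is $2$. If all expected degrees are at most $2$, the five $x_i$ alone have expected deficit at least $5-\mathbb{E}[\deg_T(v_0)]\ge 3$, a contradiction.

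In your weighted language, put $X=\{x_1,\dots,x_5\}$. The argument is the inequality
\[
2\deg_T(v_0)+\sum_{v\notin X\cup\{v_0\}}\deg_T(v)\ge 2(n-4)+1,
\]
which holds for every tree with uniform slack. So positivity of the distribution is not even needed.

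The same count with $t$ ears gives a forced deficit of only $t-2$. This exceeds $2$ only when $t\ge 5$; the case $t=4$ gives non-RP but not non-RN. That is why a three-ear gadget cannot work, and why the construction needs $K_6$ and starts at $n=11$.
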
 

Our third result separates RP from an even weaker Hamiltonian-type property.
A graph is called \emph{traceable} if it contains a Hamiltonian path. Recall that the Petersen graph is an example demonstrating that RP graphs do not have to be Hamiltonian. But is it at least true that all RP graphs are traceable?
That is, does every RP graph admit a Hamiltonian path?
We answer this question in the negative using the Thomassen 34 graph $T_{34}$ (Figure \ref{figN1}).

\begin{restatable}{theorem}{thomassenRPNTraceable}
\label{thm:RP-non-traceble}
The Thomassen $34$-graph $T_{34}$ is RP but not traceable.
\end{restatable}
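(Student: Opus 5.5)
The theorem has two parts, which I will handle separately. For non-traceability, I will use Thomassen's known result that $T_{34}$ is hypotraceable: it has no Hamiltonian path, but every vertex-deleted subgraph has one. It is cleaner, though, to give a self-contained argument from the construction. $T_{34}$ is built from four copies of the Petersen graph with a vertex removed (or a similar gadget), glued along a small central structure. A Hamiltonian path has only two endpoints, so at least two of the four gadgets must be traversed "through," with the path entering and leaving without ending inside. The Petersen gadgets have the property that any Hamiltonian path of the gadget entering and exiting through prescribed attachment vertices is impossible, because the Petersen graph is non-Hamiltonian. So I will case-split on which gadgets contain path endpoints and derive a contradiction in each case by counting entries and exits across the small edge cuts joining the gadgets.

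For the RP part, I will use Theorem~\ref{thm:devriendt-characterization}(1). It suffices to exhibit a positive distribution on $\mathcal T(T_{34})$ under which every vertex has expected degree strictly less than $2$. I will build it in two stages.

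\emph{Stage 1.} Build a distribution $\mu_0$ supported on near-Hamiltonian spanning trees, meaning spanning trees with maximum degree $3$ that are Hamiltonian paths except at a few branch vertices. Since $T_{34}$ is hypotraceable, for each vertex $v$ the graph $T_{34}-v$ has a Hamiltonian path $P_v$. Attaching $v$ as a leaf to a neighbor gives a spanning tree $T_v$ in which $v$ has degree $1$, one neighbor has degree $3$, and every other vertex has degree at most $2$. I will average over $v$ and over the choice of attachment neighbor, using the symmetry of $T_{34}$ to keep the bookkeeping small. Each vertex then has expected degree of the form $\sum_v \Pr[\cdot]\deg_{T_v}$. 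A vertex is a leaf (degree $1$) in $T_v$ at least when it is $v$ or an endpoint of $P_v$, and it has degree $3$ only when it serves as the attachment point. I need
\[
\Pr[\text{leaf}] > \Pr[\text{degree }3]
\]
for every vertex. Each $T_v$ has exactly two or three leaves against one degree-$3$ vertex, so on average leaves outnumber branch vertices. The danger is that this surplus may not be spread uniformly over the vertices. I will fix this by choosing the paths $P_v$ carefully, or by solving a small LP over the orbit types of the automorphism group of $T_{34}$ to get strict inequality at every vertex type.

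\emph{Stage 2.} Mix in positivity: set $\mu=(1-\varepsilon)\mu_0+\varepsilon\,\mathrm{Unif}(\mathcal T)$. Expected degrees depend continuously on $\varepsilon$, so the strict inequalities survive for small $\varepsilon>0$, and $\mu$ is positive. Alternatively, I could appeal to Theorem~\ref{thm:dimension} and show that $\Theta(T_{34})$ is full-dimensional in the affine hull, but the direct distribution argument is simpler.

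The main obstacle is Stage 1: guaranteeing strict slack at every vertex, since some vertices, such as the central connecting ones, may rarely be leaves. I would first try the orbit-LP computation, possibly with computer verification of the finitely many Hamiltonian paths involved.
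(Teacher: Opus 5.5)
Your non-traceability part matches the paper, which also just cites Thomassen's theorem that $T_{34}$ is hypotraceable. The self-contained sketch would not stand on its own, since a Hamiltonian path can meet a gadget in several segments and two pairs of gadgets share a vertex, but you do not need it. Stage 2 is correct and worth keeping: mixing with the uniform distribution shows that \emph{any} distribution on $\mathcal T(T_{34})$ with all expected degrees below $2$ suffices.

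The genuine gap is Stage 1, which carries the entire content of the RP claim. You never show that some mixture of the trees $T_v=P_v+vu$ has expected degree strictly below $2$ at every vertex. You only list ways you might search for one (careful choice of the $P_v$, an orbit LP, a computer check), and you flag yourself that some vertices may fail. Each such tree has total surplus $\sum_w(2-\deg_T(w))=2$ spread over $34$ vertices, and naive averaging can go wrong. For example, with $v$ uniform and $u\in N(v)$ uniform, each of the two degree-$4$ vertices is the attachment point with probability $4/102$. That exceeds the probability $1/34$ that it is the deleted vertex. So nothing in the proposal excludes a vertex with nonpositive surplus, and as written the RP part is not proved.

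The gap can be closed without computation. The surplus vector $2\cdot\mathbf 1-\deg_T$ of $T=P_v+vu$ is $\mathbf e_v+\mathbf e_a+\mathbf e_b-\mathbf e_u$, where $a,b$ are the ends of $P_v$; here $u\notin\{a,b\}$ because $T_{34}$ is not traceable.
\begin{itemize}
\item Suppose no convex combination of these vectors is strictly positive. Separating their convex hull from the open positive orthant yields $y\ge 0$, $y\ne 0$, with $y_v+y_a+y_b\le y_u$ for every admissible $(v,u,P_v)$.
\item Take $v$ with $y_v=M:=\max_w y_w>0$. Then $y_u\ge M+y_a+y_b\ge M$ forces $y_u=M$ for every neighbor $u$ of $v$.
\item By connectivity $y\equiv M$, and then $3M\le M$, a contradiction.
\end{itemize}

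Completed this way, your route genuinely differs from the paper's. It is computer-free given Thomassen's theorem. It in fact shows that every connected graph on at least three vertices all of whose vertex-deleted subgraphs are traceable is RP; if $u=a$ the surplus vector is $\mathbf e_v+\mathbf e_b$, which only helps.

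The paper instead produces an explicit certificate: edge values $a=33/68$, $b=99/136$, $c=165/272$ with $x(E(v))=33/17$ at every vertex. It shows $x\in P(T_{34})^\circ$ using the identity $2x(E[S])=\tfrac{33}{17}|S|-x(\partial S)$ together with a computer-verified bound $|\partial S|\ge 4$ for all cuts with both sides of size at least $2$. A positive distribution is then read off from this relative-interior point.
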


In polyhedral terms, Theorem~\ref{thm:RP-non-traceble} shows that the
existence of a curvature-positive point in the relevant spanning-tree/matching
polytope need not force the existence of an integer point corresponding to a
Hamiltonian path.

Our fourth result concerns grid graphs. For graphs $G$ and $H$, let
$G\times H$ denote their Cartesian product, i.e., the graph with vertex set $V(G)\times V(H)$, where two vertices $(g,h)$ and $(g',h')$ are adjacent if and only if either $g=g'$ and $hh'\in E(H)$, or $h=h'$ and $gg'\in E(G)$. A grid graph is a graph of the
form $P_m\times P_n$, where $P_k$ is the path on $k$ vertices. Devriendt~\cite{Devriendt2025}
showed that $P_m\times P_n$ is RP whenever $mn$ is even and $m,n>1$, since
in this case the grid graph is Hamiltonian. 
He further conjectured that all grid
graphs are RN~\cite{Devriendt2025} and asked whether the Cartesian products of more than two paths are RN.

\begin{conjecture}\label{conj:grid_graphs}~{\cite[Conjecture 6.8]{Devriendt2025}}
The graph $P_n \times P_m$ is RN for all $m,n\in \mathbb{N}$. 
\end{conjecture}

We answer Devriendt's above conjecture and question in the positive.

\begin{restatable}{theorem}{gridsprawlingRN}\label{thm:grid-RN}
For all positive integers $d,n_1,\ldots,n_d$, the graph $P_{n_1}\times\cdots\times P_{n_d}$ is RN.
\end{restatable}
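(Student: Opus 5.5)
We plan to use Theorem~\ref{thm:devriendt-characterization}(1) in its non-separable form. It suffices to exhibit a family $\mathcal H$ of spanning trees of $G=P_{n_1}\times\cdots\times P_{n_d}$, each of maximum degree at most $2$ (that is, Hamiltonian paths), whose support is not contained in any proper face of $P(G)$. The uniform distribution on $\mathcal H$ then has expected degree at most $2$ at every vertex. An equivalent way to say this is that the barycenter $\frac1{|\mathcal H|}\sum_{H\in\mathcal H}\mathbf e_H$ lies in $P(G)^\circ\cap 2M(G)$, using Devriendt's observation that Hamiltonian paths are exactly the integer points of $\Theta(G)$.

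First we reduce the problem. Factors $P_1$ can be deleted, since $G\times P_1\cong G$. If $d=1$, then $G$ is a path; it is its own unique spanning tree, so every distribution is non-separable and all degrees are at most $2$, and $G$ is RN. So assume $d\ge 2$ and all $n_i\ge 2$; in this case $G$ is $2$-connected. If some $n_i$ is even, $G$ is Hamiltonian: a snake Hamiltonian cycle of $P_{n_1}\times P_{n_2}$ extends inductively through each further factor. Hence $G$ is RP by Devriendt's result, and in particular RN.

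The remaining case is all $n_i$ odd and at least $3$. Here $G$ is bipartite with colour classes differing by one, so it is not Hamiltonian, but it is traceable (via the boustrophedon/snake path). Since $G$ is $2$-connected, the facets of $P(G)$ are $x_e\ge 0$ and $x(E(S))\le |S|-1$ for the vertex sets $S$ with $G[S]$ and $G/S$ both $2$-connected; note that $x_e\le1$ is implied by these. Every proper face lies in a facet. So it suffices to produce Hamiltonian paths satisfying three conditions:
\begin{enumerate}
\item for every edge $e$, some path in $\mathcal H$ contains $e$;
\item for every edge $e$, some path in $\mathcal H$ avoids $e$;
\item for every such $S$ with $2\le |S|<|V|$, some path in $\mathcal H$ induces a disconnected subgraph on $S$, i.e.\ enters and leaves $S$ at least twice.
\end{enumerate}

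To build $\mathcal H$, we would start from the family of snake paths obtained by all choices of coordinate ordering and reflection of each coordinate. We then close this family under local ``square flips'': if a Hamiltonian path uses two opposite edges $ab$, $cd$ of a $4$-cycle $abdc$ and the replacement by $ac$, $bd$ still yields a path (not a path plus a cycle), we replace them. Condition (1) should follow because each edge is parallel to some snake direction and is therefore used by a suitable snake. Condition (2) should follow by flipping a square containing $e$, or by choosing a snake that turns elsewhere.

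The main obstacle is condition (3), which must hold for all facet-defining $S$ simultaneously. Our first attempt would be induction on $d$, viewing $G$ as $n_d$ layers of copies of $G'=P_{n_1}\times\cdots\times P_{n_{d-1}}$. If $S$ meets at least two layers, a snake that traverses layers in the order $1,\dots,n_d$ and makes its inter-layer jumps at suitably chosen positions should cross the boundary of $S$ at least twice. If $S$ lies within a single layer, we would apply the inductive hypothesis in $G'$, together with the base case of odd two-dimensional grids, which we would verify directly using snakes in both orientations and flips. We expect the bookkeeping needed to guarantee a double crossing for every $2$-connected $S$ to be the most delicate part. If the bookkeeping becomes unwieldy, the fallback is to check (3) only against the point $x=$ barycenter: show $x(E(S))<|S|-1$ by proving that the average number of path components inside $S$ exceeds $1$.
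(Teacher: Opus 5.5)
\textbf{Verdict: there is a genuine gap.} Your framework matches the paper's: take the barycenter of a family of Hamiltonian paths, which lies in $2M(G)$ automatically, and show it lies in $P(G)^\circ$. That requires every edge to be used and, for every relevant $S$, some path whose restriction to $S$ is disconnected. But condition (3) is the entire content of the theorem, and you do not prove it. You offer only heuristics (``should'', ``we expect''), and the inductive heuristic fails as stated.

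Take $S=\{\x : x_d\le k\}$ with $2\le k<n_d$. Then $G[S]$ and $G/S$ are both $2$-connected, and $S$ meets at least two layers. Any path that traverses the layers in the order $1,\dots,n_d$ meets $S$ in an initial segment, so $H[S]$ is connected wherever the inter-layer jumps are placed. Likewise, if $S$ is a whole layer, the inductive hypothesis in $G'$ says nothing, since $S$ is not a proper subset of $V(G')$. Such sets must be handled by paths with a different slowest coordinate, and you give no mechanism for coordinating choices across all $S$ at once. Your fallback is not an alternative either. Since $|E(H[S])|=|S|-c(H[S])$, the inequality $x(E[S])<|S|-1$ holds exactly when some $H[S]$ is disconnected, which is (3) again.

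\textbf{The missing idea.} The paper needs no square flips, no facet characterization, and no parity split. For each coordinate $r$ and each path $H$ in the family for $G_{\widehat r}$, it takes the two snakes $H_r^{+},H_r^{-}$. These run along the $r$-lines over the vertices of $H$ in alternating directions, starting at opposite ends. Your snake family already contains these paths.

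Now let $U$ be proper with $G[U]$ connected, and suppose every path restricts to a connected subgraph on $U$. Then $U$ contains the whole subpath of each path between any two of its vertices. Let $\x,\y\in U$ be adjacent, differing in coordinate $r$, and fix $s\neq r$. Choose $H$ covering the projected edge $\overline{\x}\,\overline{\y}$ in $G_{\widehat s}$. The $\x$--$\y$ subpaths of $H_s^+$ and $H_s^-$ go around opposite ends of the two $s$-lines, so together they force both full $s$-lines into $U$. Applying the same step to a neighbour of $\x$ in direction $s$ gives the full $r$-line through $\x$ as well.

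Hence $U$ is closed under adjacency, so $U=V$, a contradiction. This argument handles every connected $U$ simultaneously. It uses only the fact that the lower-dimensional family covers every edge, and it works for all parities of the $n_i$.
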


To show Theorem \ref{thm:grid-RN}, we develop a more general framework using what we call a \emph{sprawling graph}. A graph $G=(V,E)$ is called \emph{sprawling} if there exists a collection
$\S=\{H_1,\ldots,H_q\}$ of Hamiltonian paths of $G$ satisfying the following
two conditions:
\begin{enumerate}
    \item every edge of $G$ is contained in at least one path in $\S$;
    \item for every set $U\subsetneq V$ with $2\le |U|\le |V|-1$ such that
    $G[U]$ is connected, there exists $H_i\in\S$ such that $H_i[U]$ is not a
    spanning tree of $G[U]$.
\end{enumerate}
We call such a collection $\S$ a \emph{sprawling set}. The next result shows
that sprawling graphs give a sufficient condition for RN.

\begin{restatable}{theorem}{sprawlingRN}
\label{thm:sprawling-RN}
Every sprawling graph $G$ is RN. Moreover, if $|V(G)|\ge 3$,
then $G$ is $2$-connected.
\end{restatable}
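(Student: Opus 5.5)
Use Theorem~\ref{thm:devriendt-characterization}(2): it suffices to produce a point of $P(G)^\circ\cap 2M(G)$. The natural candidate is the average $\x:=\frac1q\sum_{i=1}^q \e_{H_i}$ of the indicator vectors of the paths in the sprawling set $\S$. Each Hamiltonian path $H_i$ is a spanning tree, so $\e_{H_i}\in P(G)$. Each $H_i$ also splits into two matchings by alternating its edges, so $\e_{H_i}\in 2M(G)$. Since both sets are convex, $\x\in P(G)\cap 2M(G)$. The whole task is therefore to show that $\x$ lies in the relative interior of $P(G)$.

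For this I would use Edmonds' facet description of the spanning tree polytope. It is cut out by $\sum_{e\in E}x_e=|V|-1$, the inequalities $x_e\ge 0$, and $x(E(G[U]))\le |U|-1$ for $\emptyset\ne U\subseteq V$. A point lies in $P(G)^\circ$ exactly when it satisfies strictly every inequality that is not tight on the whole polytope. So I need strict inequality in each case below.
\begin{enumerate}
\item For each edge $e$, we have $x_e>0$: condition (1) puts $e$ in some $H_i$, so $\x$ gives $e$ weight at least $1/q$.
\item For $U$ with $2\le|U|\le|V|-1$ and $G[U]$ connected, we have $x(E(G[U]))<|U|-1$. Each $H_i[U]$ is a forest on $U$, so it has at most $|U|-1$ edges, with equality exactly when it is a spanning tree of $G[U]$. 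Condition (2) supplies some $i$ where the inequality is strict, and averaging over the paths keeps it strict.
\item The remaining inequalities are implied by the cases above, or hold with equality on all of $P(G)$. If $G[U]$ is disconnected, the inequality is the sum of the inequalities for its components plus a slack of at least $1$, and isolated singletons contribute $0\le 0$. The cases $|U|=1$ and $U=V$ are tight everywhere.
\end{enumerate}
To make case 3 rigorous I would cite the standard fact that the facet-defining inequalities are among $x_e\ge0$ and the $U$ with $G[U]$ $2$-connected (or at least connected), or simply argue that strict satisfaction of all non-implicit inequalities gives relative interiority. One subtlety is that some edge inequality $x_e\ge 0$ might be tight on all of $P(G)$; this happens exactly for bridges, where $x_e=1$ on the whole polytope. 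That is harmless, since we just need $\x$ to be nonzero on every coordinate that is not forced.

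For $2$-connectivity, suppose $|V|\ge3$ and $v$ is a cut vertex. Take a component $C$ of $G-v$ and set $U=V\setminus V(C)$. Then $G[U]$ is connected, $|U|\ge2$ and $|U|\le|V|-1$. I claim every Hamiltonian path $H$ restricts to a spanning tree on $U$. Since $G$ has no edges between $C$ and $U\setminus\{v\}$, the path $H$ enters and leaves $C$ only through $v$. So $V(C)$ forms one contiguous segment of $H$ attached at $v$, at an end of $H$. Removing that segment leaves a connected subpath spanning $U$, which contradicts condition (2). The same argument with $U=V\setminus\{w\}$ or $U = C\cup\{v\}$ handles a bridge-type disconnection, and connectivity itself holds because $G$ has a Hamiltonian path.

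I expect the main obstacle to be the rigorous relative-interior criterion in case 3. The key point is that the only constraints needing strictness are the connected-$U$ rank inequalities and the edge nonnegativities; the rest is routine.
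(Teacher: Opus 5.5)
Your proposal is correct. The RN part is the same as the paper's: you average the indicator vectors of the sprawling set, use condition (1) to get $x_e>0$, condition (2) for connected $U$ and the forest bound for disconnected $U$, and then apply the strict-inequality criterion for the relative interior.

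Your side remark about bridges is off, though harmless. For a bridge $e=uv$, it is the rank inequality for $U=\{u,v\}$ that is tight on all of $P(G)$, not $x_e\ge 0$. Your case 2 already makes that inequality strict at $\x$, so nothing breaks.

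Where you genuinely differ is the $2$-connectivity claim.
\begin{itemize}
\item The paper deduces it from the first part together with an external result, \cite[Proposition~3.7]{Devriendt2025}, which says the only RN graphs that are not $2$-connected are paths. It then observes that a path on at least three vertices is not sprawling.
\item You argue directly. If $v$ is a cut vertex of the traceable graph $G$, every Hamiltonian path restricts to a spanning path of $G[U]$ with $U=V\setminus V(C)$, which violates condition (2).
\end{itemize}

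Your step can be stated more cleanly. The vertex $v$ cannot be an end of a Hamiltonian path $H$, since otherwise $H-v$ would be a path spanning $G-v$. Hence $H=A\,v\,B$ with $A,B$ nonempty paths in $G-v$. It follows that $G-v$ has exactly two components, namely $V(A)$ and $V(B)$, and deleting either one from $H$ leaves a path.

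What each route buys:
\begin{itemize}
\item Your route is self-contained and elementary. It shows that condition (2) alone, for any nonempty family of Hamiltonian paths, already forces $2$-connectivity, without passing through RN.
\item The paper's route is shorter on the page but leans on a nontrivial cited classification of non-$2$-connected RN graphs.
\end{itemize}

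Your extra sentence about ``bridge-type disconnection'' is unnecessary. When $|V|\ge 3$, some endpoint of a bridge is a cut vertex, and the paper's definition of $2$-connected only asks that $G-v$ be connected for every $v$.
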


We then prove Theorem~\ref{thm:grid-RN} by showing that
$P_{n_1}\times\cdots\times P_{n_d}$ is sprawling whenever $d\ge 2$ and
$n_1,\ldots,n_d\ge 2$. For an illustration of the containment relationships
between the graph classes discussed above, see Figures~\ref{fig:venn}
and~\ref{fig:ex}. The code used to verify the small examples in
Figure~\ref{fig:ex} is available on GitHub.\footnote{\url{https://github.com/Gyagr7/Effective-Resistance-Curvature-Graph-Properties}}

\subsection*{Organization} In Section~\ref{sec:tdm}, we provide preliminary background on the tree double-matching polytope and give an alternative characterization of RP graphs in terms of this polytope. We prove Theorem~\ref{thm:1-tough-not-RP} in Section~\ref{sec:1-tough-RP} and Theorem~\ref{thm:RP-non-traceble} in Section~\ref{sec:RP-nontraceble}. Finally, in Section~\ref{sec:sprawling}, we discuss structural properties of sprawling graphs and prove Theorem~\ref{thm:grid-RN} and Theorem~\ref{thm:sprawling-RN}.

\begin{figure}[tbp!]
    \centering
    \resizebox{0.85\textwidth}{!}{\begin{tikzpicture}[yscale = 1.2, xscale=2]
\draw[draw=olive, thick] (-1,-1) rectangle (6,6);
\node[olive] at (-.5,5.7) {\scriptsize\underline{2-connected}};
\draw[draw=teal,thick] (0,-1) rectangle (6,3);
\node[teal] at (0.5,2.7) {\scriptsize\underline{1-tough}};
\draw[draw=cyan,thick] (1,0) rectangle (7,5);
\node[cyan] at (1.5,4.7) {\scriptsize \underline{RN}};
\draw[draw=blue,thick] (2,0) rectangle (6,2);
\node[blue] at (2.5,1.7) {\scriptsize \underline{RP}};
\draw[draw=red,thick] (3,-1) rectangle (8,7);
%(6,0)--(6,-1)--(3,-1)--(3,7)--(8,7)--(8,0)--(6,0);
%(3,-1) rectangle (7,7);
\node[red] at (3.5,6.7) {\scriptsize \underline{traceable}};
\draw[draw=purple,thick] (4,0) rectangle (6,4);
\node[purple] at (4.5,3.7) {\scriptsize \underline{sprawling}};
\draw[draw=orange,thick] (5,0) rectangle (6,1);
\node[orange] at (5.5,.7) {\scriptsize \underline{Hamiltonian}};
\node[align=center] at (6.5,.5) {\scriptsize only \\ \scriptsize $P_n$, $n\geq3$, \\ \scriptsize Fig. \ref{fig:ex:path}};
\node[align=center] at (4.5,.5) {\scriptsize Petersen \\ \scriptsize Fig. \ref{fig:ex:petersen}};
\node[align=center] at (2.5,.5) {\scriptsize Thomassen 34\\ \scriptsize Fig. \ref{figN1}};
\node at (1.5,.5) {?}; %1-tough, SRN, not traceable
\node at (.5,.5) {\scriptsize Fig. \ref{fig:ex:K5+}}; %1-tough, not RN, not traceable
\node[align=center] at (6.5,6) {\scriptsize bowtie \\ \scriptsize Fig. \ref{fig:ex:bowtie}};
\node[align=center] at (5.5,3.5) {\scriptsize $K_{n,n+1}$ \\ \scriptsize Fig. \ref{fig:ex:K23}};
\node at (3.5,.5) {\scriptsize Fig. \ref{fig:ex:K3+}}; %traceable, RP, not sprawling
\node at (4.5,5.5) {?}; % 2-ctd, traceable, not RN
\node at (5,4.5) {\scriptsize Fig. \ref{fig:ex:banana2}}; %RN, traceable, not sprawling, not 1-tough
\node at (5,2.5) {?}; %1-tough, sprawling, not RP
\node at (3.5,2.5) {\scriptsize Fig. \ref{fig:ex:K4+}}; %1-tough, SRN, traceable, not sprawling
\node at (2,4) {?}; %RN, not 1-tough, not traceable
\node at (-.5,.5) {\scriptsize Fig. \ref{fig:ex:banana}}; %2-ctd, not 1-tough, not RN, not traceable
\node at (4.5,-.5) {?}; % traceable, 1-tough, SRN
\end{tikzpicture}}
    \caption{Relationships between various connectivity, Hamiltonian-type, and resistance curvature properties, with examples when known} 
    \label{fig:venn}
\end{figure}

\begin{figure}[tbp!]
\centering
    \resizebox{0.75\textwidth}{!}{%
        \begin{minipage}{\textwidth}
            \centering
            \begin{subfigure}[t]{.3\textwidth}
\centering
\begin{tikzpicture}
\solidnodes
\node (0) at (0,0) {};
\foreach \x in {-1,1} {
\node at (-1,\x) {};
\node at (1,\x) {};
\draw[thick,-] (0) -- (-1,\x) -- (1,\x) -- (0);
};
\end{tikzpicture}
\caption{traceable, not 2-connected, not RN}
\label{fig:ex:bowtie}
\end{subfigure}
%% K_{2,3}
\begin{subfigure}[t]{.3\textwidth}
\centering
\begin{tikzpicture}
\solidnodes
\node (a) at (0,0) {};
\node (b) at (0,1) {};
\foreach \x in {-.5,.5,1.5} {
\node at (1,\x) {};
\draw[thick] (a)--(1,\x)--(b);
};
\end{tikzpicture}
\caption{sprawling, not 1-tough}
\label{fig:ex:K23}
\end{subfigure}
% small banana
\begin{subfigure}[t]{.3\textwidth}
\centering
\begin{tikzpicture}
\solidnodes
\node (L) at (0,0) {};
\node (R) at (3,0) {};
\foreach \x in {-1,0,1} {
\node (a\x) at (1,\x) {};
\node (b\x) at (2,\x) {};
\draw[thick] (L)--(a\x)--(b\x)--(R);
};
\end{tikzpicture}
\caption{traceable, SRN, not 1-tough, not sprawling}
\label{fig:ex:banana2}
\end{subfigure}

%% banana
\begin{subfigure}[t]{.3\textwidth}
\centering
\begin{tikzpicture}[yscale=.7]
\solidnodes
\node (a) at (0,0) {};
\node (b) at (3,0) {};
\foreach \x in {-1.5,-.5,.5,1.5} {
\node at (1,\x) {};
\node at (2,\x) {};
\draw[thick] (a)--(1,\x)--(2,\x)--(b);
};
\end{tikzpicture}
\caption{2-connected, not 1-tough, not traceable, not RN}
\label{fig:ex:banana}
\end{subfigure}
%% K3 plus paths to a point
\begin{subfigure}[t]{.3\textwidth}
\centering
\begin{tikzpicture}[scale=.7]
\solidnodes
\node (v) at (3,0) {};
\foreach \x in {-2,0,2} {
\node (\x) at (0,\x) {};
\node (a\x) at (1,9*\x/10) {};
\node (b\x) at (2,6*\x/10) {};
\draw[thick] (\x)--(a\x)--(b\x)--(v);
};
\draw[thick] (2)--(0)--(-2) to[bend left=30] (2);
\end{tikzpicture}
\caption{traceable, RP, not sprawling}
\label{fig:ex:K3+}
\end{subfigure}
%% K4 plus paths to a point
\begin{subfigure}[t]{.3\textwidth}
\centering
\begin{tikzpicture}[yscale=.5]
\solidnodes
\node (0) at (3,0) {};
\foreach \x in {-3,-1,1,3} {
\node (\x) at (0,\x) {};
\node (a\x) at (1,9*\x/10) {};
\node (b\x) at (2,6*\x/10) {};
\draw[thick] (\x)--(a\x)--(b\x)--(0);
};
\draw[thick] (-3)--(-1)--(1)--(3);
\draw[thick] (-3) to [bend left=30] (1);
\draw[thick] (-1) to [bend left=30] (3);
\draw[thick] (-3) to [bend left=60] (3);
\end{tikzpicture}
\caption{traceable, 1-tough, SRN, not sprawling}
\label{fig:ex:K4+}
\end{subfigure}

%% K5 plus paths to a point
\begin{subfigure}[t]{.3\textwidth}
\centering
\begin{tikzpicture}[yscale=.8]
\solidnodes
\node (v) at (3,0) {};
\foreach \x in {-2,-1,0,1,2} {
\node (\x) at (0,\x) {};
\node (a\x) at (1,9*\x/10) {};
\node (b\x) at (2,6*\x/10) {};
\draw[thick] (\x)--(a\x)--(b\x)--(v);
};
\draw[thick] (-2)--(-1)--(0)--(1)--(2);
\draw[thick] (-2) to [bend left=20] (0) to [bend left=20] (2) to [bend right=40] (-1) to [bend left=20] (1) to [bend right=40] (-2) to [bend left=60] (2);
\end{tikzpicture}
\caption{1-tough, not RN, not traceable}
\label{fig:ex:K5+}
\end{subfigure}
%% Petersen
\begin{subfigure}[t]{.3\textwidth}
\centering
\begin{tikzpicture}[scale=.8]
\solidnodes
\foreach \x in {0,1,2,3,4} {
\node (i\x) at (360*\x/5:1) {};
\node (o\x) at (360*\x/5:2) {};
\draw[thick] (i\x)--(o\x);
};
\draw[thick] (o0)--(o1)--(o2)--(o3)--(o4)--(o0);
\draw[thick] (i0)--(i2)--(i4)--(i1)--(i3)--(i0);
\end{tikzpicture}
\caption{Petersen graph: sprawling, RP, not Hamiltonian}
\label{fig:ex:petersen}
\end{subfigure}
%% path graph
\begin{subfigure}[t]{.3\textwidth}
\centering
\begin{tikzpicture}
\node (3) at (0,3) {$\vdots$};
\solidnodes
\foreach \x in {0,1,2,4} {
\node (\x) at (0,\x) {};
};
\draw[thick] (0)--(1)--(2)--(3)--(4);
\end{tikzpicture}
\caption{Path with $\geq3$ vertices: SRN, traceable, not 2-connected}
\label{fig:ex:path}
\end{subfigure}

% Thomassen 34

%\begin{tikzpicture}[scale=.7]
%\solidnodes
% upper left
%\foreach \x in {0,1,2,3,4} {
%\node (ai\x) at ([shift={(-1.5,1.9)}] 360*\x/5:1) {};
%\node (ao\x) at ([shift={(-1.5,1.9)}] 360*\x/5:2) {};
%\draw[thick] (ai\x)--(ao\x);
%};
% lower left
%\foreach \x in {0,1,2,3,4} {
%\node (bi\x) at ([shift={(-1.5,-1.9)}] 360*\x/5:1) {};
%\node (bo\x) at ([shift={(-1.5,-1.9)}] 360*\x/5:2) {};
%\draw[thick] (bi\x)--(bo\x);
%};
% upper right
%\foreach \x in {0,1,2,3,4} {
%\node (ci\x) at ([shift={(1.5,1.9)}] 360*\x/5+180:1) {};
%\node (co\x) at ([shift={(1.5,1.9)}] 360*\x/5+180:2) {};
%\draw[thick] (ci\x)--(co\x);
%};
% lower right
%\foreach \x in {0,1,2,3,4} {
%\node (di\x) at ([shift={(1.5,-1.9)}] 360*\x/5+180:1) {};
%\node (do\x) at ([shift={(1.5,-1.9)}] 360*\x/5+180:2) {};
%\draw[thick] (di\x)--(do\x);
%};
%\foreach \x in {ao,bo,co,do} {
%\draw[thick] (\x1)--(\x2)--(\x3)--(\x4);
%};
%\foreach \x in {ai,bi,ci,di} {
%\draw[thick] (\x0)--(\x2)--(\x4)--(\x1)--(\x3)--(\x0);
%"!};
%\draw[thick] (ao1)--(co4);
%\draw[thick] (bo4)--(do1);
%\end{tikzpicture}
%\caption{Thomassen 34}
%\label{fig:ex:T34}
        \end{minipage}
    }
    \caption{Some examples displaying different graph properties}
    \label{fig:ex}
\end{figure}

\section{The Tree Double-Matching Polytope}\label{sec:tdm}

In this paper, all graphs are simple. We follow the notation and terminology of Diestel~\cite{Diestel}. Let $G=(V,E)$ be a graph. For a vertex $v\in V$, we denote by $\deg_G(v)$ the degree of $v$ in $G$; when the underlying graph is clear from the context, we omit the subscript. We say that $G$ is \emph{$2$-connected} if $G-v$ is connected for every $v\in V$. Given a graph $G$ and a vertex subset $U\subseteq V(G)$, let $G[U]$ denote the subgraph of $G$ induced by $U$. 
We also recall some basic terminology concerning polytopes. For points
$\x_1,\ldots,\x_n\in \mathbb R^k$, their \emph{convex hull} is
\[
   \conv(\x_1,\ldots,\x_n)
   :=
   \left\{
      \sum_{i\in [n]}\lambda_i\x_i
      \;\middle|\;
      \lambda_i\in [0,1]\text{ for all }i\in[n],
      \text{ and }\sum_{i\in[n]}\lambda_i=1
   \right\}.
\]
A set of this form is called a \emph{polytope}. We denote by $\dim P$ the
dimension of a polytope $P$, namely the dimension of the smallest affine
subspace of $\mathbb{R}^k$ containing $P$. The \emph{relative interior} of a polytope $P$, denoted by $P^\circ$, is the
interior of $P$ taken inside its affine hull. Equivalently, if $\x_1,\ldots,\x_n$ are precisely the vertices of $P$, then every point of
$P^\circ$ can be written as a convex combination of the $\x_i$ with all coefficients positive, that is,
\[
   P^\circ
   =
   \left\{
      \sum_{i\in [n]}\lambda_i\x_i
      \;\middle|\;
      \lambda_i >0\text{ for all }i\in[n],
      \text{ and }\sum_{i\in[n]}\lambda_i=1
   \right\}.
\]
We will mostly consider polytopes in the ambient space $\mathbb R^E$, whose
coordinates are indexed by the edges of a graph. For $h\in E$, let $\e_h$ be
the corresponding standard basis vector of $\mathbb R^E$, that is, $\e_h$ is the vector whose $h$-coordinate is $1$ and whose other coordinates are $0$. For a set of edges
$S\subseteq E$, its \emph{indicator vector} is
$\e_S:=\sum_{h\in S}\e_h$.
Let $G$ be a connected graph. The \emph{spanning tree polytope} and the
\emph{matching polytope} of $G$ are defined by
\begin{align*}
   P(G) &:= \conv\{\e_T:T\text{ is a spanning tree of }G\} \text{ and }\\
   M(G) &:= \conv\{\e_M:M\text{ is a matching of }G\},
\end{align*}
respectively. 
We will be primarily concerned with the dilation
$2M(G):=\{2\x:\x\in M(G)\}$, which we call the \emph{double-matching
polytope}.

The \emph{tree double-matching} (TDM) polytope of $G$, denoted by $\Theta(G)$, is defined as the intersection
$\Theta(G):=P(G)\cap 2M(G)$. In order to study $\Theta(G)$, we first recall
standard descriptions of the double-matching polytope and the spanning tree
polytope. For $v\in V(G)$, let $E(v)$ denote the set of edges incident with
$v$. For $S\subseteq V(G)$, let $E[S]$ denote the set of edges with both
endpoints in $S$. For $F\subseteq E(G)$ and $x\in \mathbb R^{E(G)}$, write
$x(F):=\sum_{e\in F}x_e$.

\begin{lemma}[{\cite[Theorem~7.3.1]{LP86}}]
\label{lem:matching-polytope}
The double-matching polytope $2M(G)$ for a graph $G=(V,E)$ is the set of points
$x=(x_e)_{e\in E}\in\mathbb R^{E}$ satisfying
\begin{enumerate}[label=(\roman*)]
    \item $x_e\ge 0$ for all $e\in E$;
    \item $x(E(v))\le 2$ for all $v\in V$; and
    \item $x(E[S])\le |S|-1$ for every odd set $S\subseteq V$.
\end{enumerate}
\end{lemma}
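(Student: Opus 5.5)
This lemma is a direct rescaling of Edmonds' matching polytope theorem, which is the content of \cite[Theorem~7.3.1]{LP86}. I would therefore not reprove the polyhedral description from scratch. Instead, I would quote Edmonds' description of $M(G)$ and transport it through the dilation $x\mapsto 2x$.

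\emph{Step 1: state Edmonds' theorem.} It says that $M(G)$ is exactly the set of $y\in\mathbb R^{E}$ satisfying three families of constraints:
\begin{itemize}
\item $y_e\ge 0$ for all $e\in E$;
\item $y(E(v))\le 1$ for all $v\in V$;
\item $y(E[S])\le \frac{|S|-1}{2}$ for every odd set $S\subseteq V$.
\end{itemize}

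\emph{Step 2: transfer through the dilation.} The map $y\mapsto 2y$ is a linear bijection of $\mathbb R^E$, and $2M(G)$ is by definition its image of $M(G)$. Hence $x\in 2M(G)$ if and only if $y=x/2\in M(G)$. Substituting $y=x/2$ into the three families and multiplying through by $2$ gives exactly conditions (i), (ii) and (iii):
\begin{itemize}
\item $x_e\ge 0$;
\item $x(E(v))\le 2$;
\item $x(E[S])\le |S|-1$.
\end{itemize}
The linear functionals $x\mapsto x(F)$ commute with scaling, so nothing else needs checking.

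\emph{Step 3: the underlying theorem, if a self-contained argument is wanted.} The only substantive content is Edmonds' theorem itself.
\begin{itemize}
\item \textbf{Easy inclusion.} $M(G)$ is contained in the described polyhedron $Q$. It suffices to check this on the vertices $\e_M$: a matching meets each vertex at most once, and it has at most $(|S|-1)/2$ edges inside an odd set $S$. Convexity then gives the inclusion.
\item \textbf{Hard inclusion.} $Q\subseteq M(G)$ requires showing every vertex of $Q$ is integral. I would follow the standard minimal-counterexample argument: take $G$ with $|V|+|E|$ minimal and a fractional vertex $x$ of $Q$.
\begin{itemize}
\item If no odd-set constraint with $|S|\ge 3$ is tight, then $x$ is a vertex of the degree-constrained polytope. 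Its support then consists of disjoint even cycles and paths, which contradicts fractionality: perturbing alternately along such a cycle or path stays in $Q$.
\item If some odd-set constraint is tight with $3\le |S|\le |V|-2$, contract $S$, respectively $V\setminus S$, to a single vertex. The two smaller graphs have integral polytopes by minimality. Glue the resulting matching decompositions along the edges of the cut $\delta(S)$, where tightness forces $x(\delta(S))\le 1$.
\end{itemize}
\end{itemize}

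The main obstacle would be this gluing step in the hard inclusion. Since the lemma is cited, I would rely on the reference for it and present only Steps 1 and 2.
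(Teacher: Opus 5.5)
Your Steps 1 and 2 are correct and match the paper's treatment. The paper gives no proof beyond citing Edmonds' matching polytope theorem from Lov\'asz--Plummer, and the lemma is just the image of that description under the dilation $y\mapsto 2y$.

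Step 3 is unnecessary, and as sketched it would not stand on its own, so you are right to drop it. In the first case, the fractional vertices of the degree-constrained polytope have odd cycles with all values $\frac12$. These must be excluded by inequality (iii), not by perturbation. Also, your case split misses the situation where the only tight odd sets have $|S|\in\{|V|-1,|V|\}$.
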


\begin{lemma}[{\cite[Proposition~2.3]{FS05}}]
\label{lem:treepoly-char}
The spanning tree polytope $P(G)$ for a graph $G=(V,E)$ is the set of points
$x=(x_e)_{e\in E}\in\mathbb R^{E}$ satisfying
\begin{enumerate}[label=(\roman*)]
    \item $x_e\ge 0$ for all $e\in E$;
    \item $\mathbf 1\cdot x=|V|-1$, where $\mathbf 1$ is the vector of all
    ones; and
    \item $x(E[S])\le |S|-1$ for every nonempty proper set
    $S\subsetneq V$.
\end{enumerate}
\end{lemma}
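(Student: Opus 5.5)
The statement just before the proof is Lemma~\ref{lem:treepoly-char}, the standard linear description of the spanning tree polytope (Edmonds). The proof has two inclusions. Let $Q$ denote the set of points satisfying (i)--(iii).

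First, $P(G)\subseteq Q$ is routine. $Q$ is convex, so it suffices to check that each $\e_T$ lies in $Q$. Nonnegativity is clear, and $\mathbf 1\cdot\e_T=|T|=|V|-1$. For any nonempty $S\subsetneq V$, the edges of $T$ inside $S$ form a forest on $S$, so there are at most $|S|-1$ of them.

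Second, $Q\subseteq P(G)$ is the substantive direction. I would derive it from the forest polytope via a greedy/LP-duality argument. Extend (iii) to $S=V$: since $x(E)=|V|-1$, the inequality $x(E[V])\le|V|-1$ also holds, so $Q$ is the face $\{\mathbf 1\cdot x=|V|-1\}$ of
\[
F:=\{x\ge 0: x(E[S])\le |S|-1 \text{ for all nonempty } S\subseteq V\}.
\]
It then suffices to show that every vertex of $F$ is the indicator vector of a forest. Given that, vertices of the face $Q$ are forest indicators with $|V|-1$ edges, and these are exactly the spanning trees.

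To show $F$ is integral, I would fix a weight vector $w\in\mathbb R^E$ and prove that $\max\{w\cdot x: x\in F\}$ is attained at a forest. Order the edges by decreasing positive weight, and let $\bar x$ be the output of Kruskal's greedy algorithm restricted to edges with $w_e>0$. Then construct a dual solution of equal value. Let $e_1,\dots,e_k$ be the positive-weight edges in sorted order, and let $V_i$ be the partition into components after adding $e_1,\dots,e_i$. Assign dual value $w_{e_i}-w_{e_{i+1}}\ge 0$ (with $w_{e_{k+1}}:=0$) to each component class of $V_i$, viewed as a set $S$.

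This uses the standard telescoping check. Each edge $e_j$ is covered by the sets $S$ of the partitions $V_i$ with $i\ge j$ that contain both of its endpoints, so the dual constraint $\sum_{S\ni e}y_S\ge w_e$ holds. Dual feasibility for edges with $w_e\le 0$ is trivial. Complementary slackness against $\bar x$ holds because each component $S$ of $V_i$ spans exactly $|S|-1$ greedy edges. Weak duality then gives optimality of $\bar x$, so every vertex of $F$, being the unique maximizer of some $w$, is a forest indicator.

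The main obstacle is getting this dual construction and the complementary-slackness bookkeeping right. An alternative is an uncrossing argument: take a vertex of $Q$ and choose a maximal laminar family of tight sets, using supermodularity of $x(E[S])$, that is, $x(E[A])+x(E[B])\le x(E[A\cup B])+x(E[A\cap B])$ when $A\cap B\ne\emptyset$. Then show this laminar system has a totally unimodular constraint matrix. Since the statement is quoted from \cite{FS05}, I would present the Kruskal duality proof as the cleanest route.
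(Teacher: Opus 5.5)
Your proof is correct. The paper does not prove this lemma; it quotes the known inequality description of the spanning tree (graphic-matroid base) polytope from \cite{FS05}. So your argument is a genuinely different, self-contained route: Edmonds' argument. You show that $Q$ is a face of the forest polytope $F$, and that $F$ is integral because Kruskal's greedy forest is LP-optimal for every weight vector, certified by the telescoping dual supported on the component classes of the partitions $V_i$.

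The citation buys brevity and generality, since the analogous statement $x(A)\le r(A)$, $A\subseteq E$, holds for any matroid. But using the matroid form still requires a translation into the vertex-set form (iii), via $x\ge 0$ and $r(A)=\sum_C(|C|-1)$ over the vertex sets $C$ of the components of $(V,A)$. Your route works directly with (iii), avoids matroid language, and proves the forest-polytope description along the way.

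Three pieces of bookkeeping should be made explicit.

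\begin{enumerate}[label=(\alph*)]
\item A set $S$ is usually a class of several partitions $V_i$. Its dual value $y_S$ must be the sum of the contributions $w_{e_i}-w_{e_{i+1}}$ over all such $i$.
\item Complementary slackness has two halves, and you checked only the primal one, namely tightness of $x(E[S])\le |S|-1$ at the classes $S$. You also need $\sum_{S\supseteq e_j}y_S=w_{e_j}$ for each greedy edge $e_j\in\bar F$, where the sum runs over sets containing both ends of $e_j$. This holds because Kruskal accepted $e_j$ exactly when its ends lay in different classes of $V_{j-1}$, and hence of every $V_i$ with $i<j$. So only the indices $i\ge j$ contribute, and the sum telescopes to $w_{e_j}$. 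Alternatively, verify directly by Abel summation that the dual objective $\sum_i (w_{e_i}-w_{e_{i+1}})\,|\bar F\cap\{e_1,\dots,e_i\}|$ equals $w(\bar F)$.
\item Note that $F\subseteq[0,1]^E$, by taking $S$ to be the two ends of an edge. Hence $F$ and its face $Q$ are polytopes: their vertices are exposed, and each is the convex hull of its vertices. This is what turns ``every vertex of $Q$ is a spanning tree'' into $Q\subseteq P(G)$. It also covers disconnected $G$, where $Q$ has no vertices and is therefore empty.
\end{enumerate}
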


It is routine to obtain the following description of the relative interior of
the spanning tree polytope from Lemma~\ref{lem:treepoly-char}. We include a
short proof for completeness.

\begin{lemma}[Relative interior of the spanning tree polytope]
\label{lem:treepoly-relint}
Let $G=(V,E)$ be a $2$-connected graph. Then $P(G)^\circ$ is the set of
points $x=(x_e)_{e\in E}\in\mathbb R^E$ satisfying
\begin{enumerate}[label=(\roman*)]
    \item $x_e>0$ for every $e\in E$;
    \item $\mathbf 1\cdot x=|V|-1$; and
    \item $x(E[S])<|S|-1$ for every $S\subsetneq V$ with
    $2\le |S|\le |V|-1$.
\end{enumerate}
\end{lemma}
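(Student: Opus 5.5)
Since $P(G)$ is the polyhedron cut out by the system in Lemma~\ref{lem:treepoly-char}, its relative interior consists of the points of $P(G)$ at which every inequality that is not an implicit equality of the system holds strictly. So the plan is to identify the implicit equalities, i.e.\ the constraints that are tight on all of $P(G)$. The equation $\mathbf 1\cdot x=|V|-1$ is always an equality. Once we know that no inequality is tight on all of $P(G)$, we get exactly (i)–(iii). One point needs care. The sets $S$ with $|S|=1$ give the constraint $0\le 0$, which is trivially tight. Also, for $S=V$ the constraint is the defining equation. This is why (iii) ranges only over $2\le |S|\le |V|-1$.

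\textbf{Step 1: no inequality is tight on all of $P(G)$.} For $x_e\ge 0$, note that $G$ is $2$-connected, so no edge is a bridge. Hence some spanning tree $T$ avoids $e$, giving $(\e_T)_e=0$. Some other spanning tree contains $e$, so $x_e=0$ is not identically tight. For the constraint $x(E[S])\le |S|-1$ with $2\le |S|\le |V|-1$, it suffices to find one spanning tree $T$ with $|T\cap E[S]|<|S|-1$. This means $T[S]$ is not a spanning tree of $G[S]$.

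\emph{Case 1: $G[S]$ is disconnected.} Then $|T\cap E[S]|\le |S|-2$ for every spanning tree $T$, and we are done.

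\emph{Case 2: $G[S]$ is connected.} Because $|V\setminus S|\ge 1$ and $|S|\ge 2$, we want a spanning tree in which $S$ is not internally connected. Pick two vertices $u\ne v$ in $S$. By $2$-connectivity (Menger), there is a path from $u$ to $v$ through $V\setminus S$; I would justify this below. Extend the edges of this path whose endpoints lie in $S$ only at the path's ends, together with a spanning forest of $G[S]$ that has exactly two components (one containing $u$, one containing $v$), to a spanning tree. In that tree, $S$ induces at most $|S|-2$ edges.

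The main obstacle is making Case 2 clean. My first attempt: since $G$ is $2$-connected and $V\setminus S\neq\emptyset$, there are at least two edges between $S$ and $V\setminus S$ with distinct endpoints in $S$. Indeed, if all such edges met $S$ in a single vertex $w$, then $w$ would be a cut vertex, since $|S|\ge 2$ leaves something on the $S$ side. So take $u\ne v$ in $S$ adjacent to $a,b\in V\setminus S$. Let $C$ be the component of $G[V\setminus S]$ containing $a$. Either $C$ attaches to $S$ at two distinct vertices, giving the desired path, or we argue via a cut vertex. A cleaner route: $G-E[S]$ contracted appropriately... I would rather just use Menger to get two internally disjoint paths from a vertex $z\in V\setminus S$ to $S$, ending at distinct $u,v\in S$. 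Concatenating them gives a $u$–$v$ path $Q$ whose interior lies outside $S$. Then take a spanning forest $F$ of $G[S]$ with two components separating $u$ from $v$. Such an $F$ exists: delete one edge on the $u$–$v$ path of a spanning tree of $G[S]$. The set $F\cup E(Q)$ is acyclic, so extend it to a spanning tree $T$. Then $T\cap E[S]=F$, since any further $S$-edge would close a cycle through $Q$ or lie within a component of $F$. Hence $|T\cap E[S]|=|S|-2$.

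\textbf{Step 2: identify the relative interior.} Lemma~\ref{lem:treepoly-char} shows that the affine hull of $P(G)$ is cut out by $\mathbf 1\cdot x=|V|-1$ together with the implicit equalities, and Step 1 shows there are none besides this equation. Therefore the affine hull is that hyperplane, and the relative interior is the set where all the remaining inequalities are strict. This is the standard fact about relative interiors of polyhedra. The result is exactly (i)–(iii).
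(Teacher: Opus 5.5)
Your proof is correct and is essentially the paper's argument. You apply the relative-interior criterion to the description in Lemma~\ref{lem:treepoly-char} and check that no inequality other than $\mathbf 1\cdot x=|V|-1$ and the trivial singleton constraints is tight on all of $P(G)$. When $G[S]$ is connected, you use $2$-connectivity to get a $u$--$v$ path with interior outside $S$, and hence a spanning tree $T$ with $T[S]$ disconnected. You obtain that path from the fan lemma, while the paper takes a component of $G-S$ with two distinct attachment vertices in $S$ and deletes an edge of the resulting cycle inside $T_S$. One cosmetic slip: for $x_e\ge 0$ you only need a spanning tree containing $e$, so the remark that some tree avoids $e$ is irrelevant.
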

\begin{proof}
We use the standard fact that, if a polytope is described inside its affine
hull by finitely many valid inequalities, then its relative interior consists
of the points satisfying strictly every inequality that is not an equality on
the whole polytope.

By Lemma~\ref{lem:treepoly-char}, the spanning tree polytope $P(G)$ is
described by
\[
   x_e\ge 0\quad(e\in E), \qquad
   \mathbf 1\cdot x=|V|-1, \qquad
   x(E[S])\le |S|-1\quad(\emptyset\neq S\subsetneq V).
\]
The singleton inequalities are identities, since $E[\{v\}]=\emptyset$.

We now show that none of the remaining inequalities is an equality on all of
$P(G)$. First, every edge of $G$ is contained in some spanning tree, so no
inequality $x_e\ge 0$ is tight on all of $P(G)$.
Now let $S\subsetneq V$ with $2\le |S|\le |V|-1$. If $G[S]$ is disconnected,
then for every spanning tree $T$ of $G$, the forest $T[S]$ has at most
$|S|-2$ edges. Hence $x(E[S])\le |S|-1$ is not tight on all of $P(G)$.
It remains to consider the case where $G[S]$ is connected. Since $G$ is
$2$-connected, every component of $G-S$ has at least two neighbors in $S$.
Choose a component $C$ of $G-S$, and choose two distinct vertices
$u,v\in S$ adjacent to $C$. Let $T_S$ be a spanning tree of $G[S]$, and for
each component of $G-S$, choose a spanning tree of that component. Add one
edge from each component of $G-S$ to $S$, and for the chosen component $C$,
add two such edges, one incident with $u$ and one incident with $v$. The
resulting connected spanning subgraph has exactly one cycle, and this cycle
contains the path in $T_S$ from $u$ to $v$. Deleting an edge of this path
gives a spanning tree $T$ of $G$ such that $T[S]$ is disconnected. Therefore
$|E(T[S])|\le |S|-2$, so the inequality $x(E[S])\le |S|-1$ is not tight on
all of $P(G)$.

Thus the relative interior of $P(G)$ is obtained from the spanning-tree
polytope description by making precisely the nontrivial inequalities strict.
This gives the stated description.
\end{proof}

The following description follows immediately from the standard descriptions
of the matching polytope and the spanning tree polytope; see
Lemmas~\ref{lem:matching-polytope} and~\ref{lem:treepoly-char}. We include
the short proof for completeness.

\begin{proposition}[Half-space description of the TDM polytope]
\label{prop:tdm-char}
The TDM polytope $\Theta(G)$ for a graph $G=(V,E)$ is the set of points
$x=(x_e)_{e\in E}\in\mathbb R^{E}$ satisfying
\begin{enumerate}[label=(\roman*)]
    \item \label{ineqi}
    $x_e\ge 0$ for all $e\in E$, and $\mathbf 1\cdot x=|V|-1$;

    \item \label{ineqii}
    $x(E(v))\le 2$ for all $v\in V$;
    and

    \item \label{ineqiii}
    $x(E[S])\le |S|-1$ for every nonempty proper set
    $S\subsetneq V$.
\end{enumerate}
\end{proposition}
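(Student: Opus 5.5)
The plan is to intersect the two half-space descriptions in Lemmas~\ref{lem:matching-polytope} and~\ref{lem:treepoly-char} and discard the constraints that become redundant. By definition $\Theta(G)=P(G)\cap 2M(G)$. A point therefore lies in $\Theta(G)$ exactly when it satisfies all the inequalities of Lemma~\ref{lem:treepoly-char} together with all those of Lemma~\ref{lem:matching-polytope}. The combined list consists of:
\begin{itemize}
\item nonnegativity $x_e\ge 0$, which appears in both lemmas;
\item the equation $\mathbf 1\cdot x=|V|-1$;
\item the degree bounds $x(E(v))\le 2$;
\item the bounds $x(E[S])\le |S|-1$ for every nonempty proper $S\subsetneq V$, from the spanning tree polytope;
\item the bounds $x(E[S])\le |S|-1$ for every odd $S\subseteq V$, from the double-matching polytope.
\end{itemize}
Items \ref{ineqi}--\ref{ineqiii} of Proposition~\ref{prop:tdm-char} are precisely this list, except that the odd-set constraints are missing. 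So the whole proof amounts to showing that every odd-set constraint is implied by the others.

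For odd $S$ with $S\neq V$ and $S$ nonempty, the odd-set constraint coincides with a constraint already in item~\ref{ineqiii}, so nothing needs to be shown. The empty set is not odd. The only remaining case is $S=V$ when $|V|$ is odd. There the constraint reads $x(E)\le |V|-1$, and it follows immediately from the equation $\mathbf 1\cdot x=|V|-1$ in item~\ref{ineqi}, since $x(E[V])=x(E)=\mathbf 1\cdot x$.

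For the converse inclusion, suppose $x$ satisfies \ref{ineqi}--\ref{ineqiii}. Then $x$ meets every condition of Lemma~\ref{lem:treepoly-char}, so $x\in P(G)$. By the observation above, $x$ also meets every condition of Lemma~\ref{lem:matching-polytope}, so $x\in 2M(G)$. Hence $x\in\Theta(G)$, and the two sets are equal.

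I do not expect any real obstacle. The only point needing care is the bookkeeping that the $S=V$ odd-set constraint is absorbed by the equality constraint. It is also worth noting that the cited Lemma~\ref{lem:matching-polytope} is stated for the dilated polytope $2M(G)$, with right-hand sides $2$ and $|S|-1$, so no rescaling step is needed.
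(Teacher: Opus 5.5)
Your proposal is correct and follows essentially the same argument as the paper. Both directions come from combining Lemmas~\ref{lem:matching-polytope} and~\ref{lem:treepoly-char}, and the only bookkeeping needed is that odd-set constraints for nonempty proper $S$ already appear in (iii), while the $S=V$ constraint follows from $\mathbf 1\cdot x=|V|-1$.
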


\begin{proof}
Let $Q$ denote the set of points satisfying \ref{ineqi}--\ref{ineqiii}. If
$x\in\Theta(G)=P(G)\cap 2M(G)$, then $x$ satisfies \ref{ineqi} and
\ref{ineqiii} by Lemma~\ref{lem:treepoly-char}, and it satisfies \ref{ineqii}
by Lemma~\ref{lem:matching-polytope}. Hence $\Theta(G)\subseteq Q$.

Conversely, let $x\in Q$. By Lemma~\ref{lem:treepoly-char}, we have
$x\in P(G)$. It remains to show that $x\in 2M(G)$. The nonnegativity
constraints and vertex inequalities in Lemma~\ref{lem:matching-polytope} are
exactly the corresponding parts of \ref{ineqi} and \ref{ineqii}. Now let
$S\subseteq V(G)$ be odd. If $S\subsetneq V(G)$, then \ref{ineqiii} gives
$x(E[S])\le |S|-1$. If $S=V(G)$, then \ref{ineqi} gives
$x(E[S])=\mathbf 1\cdot x=|V(G)|-1=|S|-1$. Thus all odd-set inequalities in
Lemma~\ref{lem:matching-polytope} hold, and hence $x\in 2M(G)$. Therefore
$Q\subseteq \Theta(G)$.
\end{proof}

This representation is not necessarily minimal. Some vertex inequalities may
be redundant, and the spanning-tree inequalities may combine with the vertex
inequalities to force additional equalities, thereby lowering the dimension of
$\Theta(G)$. Combining Lemma~\ref{lem:treepoly-relint} with
Lemma~\ref{lem:matching-polytope} gives the following description of
$P(G)^\circ\cap 2M(G)$. We omit the proof.

\begin{proposition}[Half-space description of $P(G)^\circ\cap 2M(G)$]
\label{prop:open-tdm-char}
Let $G=(V,E)$ be a $2$-connected graph. Then $P(G)^\circ\cap 2M(G)$ is the set of
points $x=(x_e)_{e\in E}\in\mathbb R^E$ satisfying
\begin{enumerate}[label=(\roman*), ref=(\roman*)]
    \item \label{relintineqi}
    $x_e>0$ for all $e\in E$, and $\mathbf 1\cdot x=|V|-1$;

    \item \label{relintineqii}
    $x(E(v))\le 2$ for all $v\in V$;
    and

    \item \label{relintineqiii}
   $x(E[S])<|S|-1$ for every $S\subsetneq V$ with $2\le |S|\le |V|-1$.
\end{enumerate}
\end{proposition}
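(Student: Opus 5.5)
The statement is Proposition~\ref{prop:open-tdm-char}: for a $2$-connected graph $G$, the set $P(G)^\circ\cap 2M(G)$ consists exactly of the points satisfying \ref{relintineqi}--\ref{relintineqiii}. I would prove it by double inclusion, following the proof of Proposition~\ref{prop:tdm-char} but using Lemma~\ref{lem:treepoly-relint} in place of Lemma~\ref{lem:treepoly-char}. Throughout, let $Q^\circ$ denote the set of points satisfying \ref{relintineqi}--\ref{relintineqiii}.

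For the inclusion $P(G)^\circ\cap 2M(G)\subseteq Q^\circ$, take $x\in P(G)^\circ\cap 2M(G)$.
\begin{enumerate}[label=(\alph*)]
    \item Since $x\in P(G)^\circ$ and $G$ is $2$-connected, Lemma~\ref{lem:treepoly-relint} gives $x_e>0$ for all $e$, $\mathbf 1\cdot x=|V|-1$, and $x(E[S])<|S|-1$ for every $S\subsetneq V$ with $2\le |S|\le |V|-1$. These are exactly \ref{relintineqi} and \ref{relintineqiii}.
    \item Since $x\in 2M(G)$, Lemma~\ref{lem:matching-polytope}(ii) gives $x(E(v))\le 2$ for all $v$, which is \ref{relintineqii}.
\end{enumerate}

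For the reverse inclusion, take $x\in Q^\circ$.
\begin{enumerate}[label=(\alph*)]
    \item Conditions \ref{relintineqi} and \ref{relintineqiii} are precisely the three conditions of Lemma~\ref{lem:treepoly-relint}, so $x\in P(G)^\circ$.
    \item To see $x\in 2M(G)$, I check the inequalities of Lemma~\ref{lem:matching-polytope}. Nonnegativity follows from $x_e>0$, and the vertex inequalities are \ref{relintineqii}.
    \item For an odd set $S$, there are three cases.
    \begin{itemize}
        \item If $|S|=1$, then $E[S]=\emptyset$ and the inequality reads $0\le 0$.
        \item If $S=V$, then $x(E[V])=\mathbf 1\cdot x=|V|-1$ by \ref{relintineqi}.
        \item Otherwise $3\le |S|\le |V|-1$, and \ref{relintineqiii} gives the strict, hence weak, inequality $x(E[S])<|S|-1$.
    \end{itemize}
\end{enumerate}
Hence $x\in 2M(G)$, and so $x\in P(G)^\circ\cap 2M(G)$.

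There is no real obstacle. The substantive work, namely identifying which tree-polytope facets are nontrivial (this is where $2$-connectivity enters), is already done in Lemma~\ref{lem:treepoly-relint}. The only points to watch are the boundary cases $|S|=1$ and $S=V$ in the odd-set inequalities, which are handled in the case split above.
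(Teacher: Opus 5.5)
Your proposal is correct and takes the same approach as the paper. The paper omits this proof, saying only that the statement follows by combining Lemma~\ref{lem:treepoly-relint} with Lemma~\ref{lem:matching-polytope}; your double inclusion, modeled on the proof of Proposition~\ref{prop:tdm-char} and handling the odd sets with $|S|=1$ and $S=V$ separately, is exactly that combination written out.
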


We will also use the standard fact that, for a connected graph $G=(V,E)$ with
$|V|\ge 3$, the spanning tree polytope has dimension $|E|-1$ if and only if
$G$ is $2$-connected.
We are now ready to prove Theorem \ref{thm:dimension}, which gives another characterization of RP graphs in terms of the dimension of the TDM polytope. We restate Theorem \ref{thm:dimension} here for convenience.

\dimchar*

\begin{proof}
Let $A=\operatorname{aff}(P(G))$ be the affine hull of the spanning tree
polytope, and let
\[
   C:=\{x\in\mathbb R^E:x(E(v))\le 2 \text{ for all } v\in V\}.
\]
By Proposition~\ref{prop:tdm-char}, we have $\Theta(G)=P(G)\cap C$.
By Theorem~\ref{thm:devriendt-characterization}, the graph $G$ is RP if and
only if there exists $x\in P(G)^\circ$ such that $x(E(v))<2$ for every
$v\in V$. Equivalently, $P(G)^\circ$ intersects the relative interior of $C$
inside $A$.

Suppose first that $G$ is RP. Then there is a point
$x\in P(G)^\circ$ such that $x(E(v))<2$ for every $v\in V$. Since
$P(G)^\circ$ is open in $A$ and the inequalities $x(E(v))<2$ are strict, a
small relative neighborhood of $x$ in $A$ is contained in $\Theta(G)$. Hence
$\Theta(G)$ has nonempty relative interior in $A$, and therefore
$\dim\Theta(G)=\dim P(G)$. Since a connected RP graph is $2$-connected
except for the trivial case $K_2$, the spanning tree polytope has dimension
$\dim P(G)=|E|-1$. Thus $\dim\Theta(G)=|E|-1$.

Conversely, suppose that $\dim\Theta(G)=|E|-1$. Since
$\Theta(G)\subseteq P(G)$ and $P(G)$ is contained in the hyperplane
$\mathbf 1\cdot x=|V|-1$, we have $\dim\Theta(G)\le \dim P(G)\le |E|-1$.
Thus $\dim\Theta(G)=\dim P(G)=|E|-1$. Hence $\Theta(G)$ has nonempty
relative interior inside $A:=\operatorname{aff}(P(G))$. Choose $x$ in this
relative interior. Since $\Theta(G)\subseteq P(G)$ and the two polytopes have
the same dimension, we have $x\in P(G)^\circ$.

We claim that $x(E(v))<2$ for every $v\in V$. Suppose instead that
$x(E(v))=2$ for some vertex $v$. Since $\dim P(G)=|E|-1$, the graph $G$ is
$2$-connected, and hence
$A=\{y\in\mathbb R^E:\mathbf 1\cdot y=|V|-1\}$.
Because $|V|\ge 3$, the graph $G-v$ is connected and has at least two
vertices, so there exists an edge $f$ not incident with $v$. Choose an edge
$e$ incident with $v$. For sufficiently small $\varepsilon>0$, the point
$y:=x+\varepsilon \e_e-\varepsilon \e_f$
still lies in $A$, but satisfies
\[
   y(E(v))=x(E(v))+\varepsilon=2+\varepsilon>2.
\]
Thus $y\notin\Theta(G)$.
This contradicts the fact that $x$ is in the relative interior of
$\Theta(G)$ inside $A$: every sufficiently small relative neighborhood of
$x$ in $A$ must be contained in $\Theta(G)$. Therefore $x(E(v))<2$ for every
$v\in V$. By Theorem~\ref{thm:devriendt-characterization}, $G$ is RP.
\end{proof}

Note that Theorem~\ref{thm:dimension} should not be interpreted as saying
that the relative interiors of $P(G)$ and $2M(G)$ intersect. For example, when
$G$ is $2$-connected and \emph{factor-critical} (i.e. $G-v$ has a perfect matching
for every $v\in V(G)$), the odd-set inequality
$x(E)\le |V|-1$ defines a facet of $2M(G)$, while $P(G)$ is contained in the
hyperplane $x(E)=|V|-1$. Thus any intersection with $P(G)$ lies on the
boundary of $2M(G)$, even though $\Theta(G)$ may still have full dimension
inside $\operatorname{aff}(P(G))$.

\section{Existence of \texorpdfstring{$1$}{1}-tough non-RN graphs}\label{sec:1-tough-RP}

In this section, we show Theorem \ref{thm:1-tough-not-RP} which we will restate here for convenience.
\onetoughNotRP*

Let $t\ge 3$, and let the complete graph $K_{t+1}$ have vertex set $\{v_0,v_1,\ldots,v_t\}$. For positive integers $s_1,\ldots,s_t$, let $G_t(s_1,\ldots,s_t)$ be the graph obtained from $K_{t+1}$ by subdividing the edge $v_0v_i$ exactly $s_i$ times for each $i\in[t]$. Equivalently, $G_t(s_1,\ldots,s_t)$ is obtained from $K_{t+1}$ by replacing each edge $v_0v_i$ with a path $Q_i$ of length $s_i+1$ from $v_0$ to $v_i$, where the paths $Q_1,\ldots,Q_t$ are internally disjoint; see Figure~\ref{fig:k_4+vertex-generalized}. 

\begin{figure}[htb]
    \centering
     \resizebox{0.4\textwidth}{!}{\begin{tikzpicture}[yscale=.55]
    \solidnodes

    % rightmost vertex
    \node (v0) at (3,0) {};
    \node[draw=none,fill=none,font=\scriptsize] at (3,-0.8) {$v_0$};

    % left vertices
    \foreach \i/\y in {1/3,2/1.5,3/0,4/-1.5,5/-3} {
        \node[label={[font=\scriptsize]north east:$v_{\i}$}] (v\i) at (0.0,\y) {};
        \node (a\i) at (1,0.9*\y) {};
        \node (c\i) at (1.5,0.75*\y) {};
        \node[label={[font=\scriptsize]north west:$x_{\i}$}] (x\i) at (2,0.6*\y) {};

        % subdivided path from v_i to v_0
        \draw[thick] (v\i)--(a\i);
        \draw[thick,dotted] (a\i)--(c\i)--(x\i);
        \draw[thick] (x\i)--(v0);
    }

    % edges of the K5 drawn on the left side
    \draw[thick] (v1)--(v2)--(v3)--(v4)--(v5);
    \draw[thick] (v1) to[bend right=18] (v3);
    \draw[thick] (v2) to[bend right=18] (v4);
    \draw[thick] (v3) to[bend right=18] (v5);
    \draw[thick] (v1) to[bend right=32] (v4);
    \draw[thick] (v2) to[bend right=32] (v5);
    \draw[thick] (v1) to[bend right=46] (v5);
\end{tikzpicture}

 % \begin{tikzpicture}[yscale=.4]
 %        \solidnodes
 %        % rightmost degree-4 vertex
 %        \node (0) at (3,0) {};
 %        \node[draw=none,fill=none,] at (3,-0.7) {$z$};

 %        \foreach \x in {-3,-1,1,3} {
 %            \node (\x) at (0,\x) {};
 %            \node (a\x) at (1,9*\x/10) {};
 %            \node (c\x) at (1.5,3*\x/4) {};   % new subdivision vertex
 %            \node (b\x) at (2,6*\x/10) {};

 %            % path to the rightmost vertex
 %            \draw[thick] (\x)--(a\x);
 %            \draw[thick,dotted] (a\x)--(c\x)--(b\x);
 %            \draw[thick] (b\x)--(0);
 %        };
 %        \draw[thick] (-3)--(-1)--(1)--(3);
 %        \draw[thick] (-3) to [bend left=30] (1);
 %        \draw[thick] (-1) to [bend left=30] (3);
 %        \draw[thick] (-3) to [bend left=60] (3);
 %    \end{tikzpicture}
}
    \caption{$G_5(s_1, s_2, s_3, s_4, s_5)$}
    \label{fig:k_4+vertex-generalized}
\end{figure}

Dawes and Rodrigues~\cite{DawesRodrigues1990} showed that $G_t(s_1,\ldots,s_t)$ is $1$-tough under the stronger assumption that $s_i\ge 3$ for every $i\in[t]$. The same argument in fact shows that the conclusion holds whenever $s_i\ge 1$ for every $i\in[t]$. Since their proof is rather compressed, we include a more detailed proof for completeness.

\begin{lemma}\label{lem:Gt-1-tough}
Let $t\ge 3$, and let $s_1,\ldots,s_t$ be positive integers. The graph
$G_t(s_1,\ldots,s_t)$ is $1$-tough.
\end{lemma}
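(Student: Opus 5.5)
Let $G=G_t(s_1,\ldots,s_t)$, let $K=\{v_1,\ldots,v_t\}$, and let $Q_i^\circ$ denote the set of $s_i\ge 1$ internal vertices of $Q_i$. We must show that every separating set $S\subseteq V(G)$ satisfies $c(G-S)\le |S|$, where $c(\cdot)$ counts components. The plan is to fix $S$, split it as $S=S_0\cup S_K\cup\bigcup_i S_i$ with $S_0=S\cap\{v_0\}$, $S_K=S\cap K$, $S_i=S\cap Q_i^\circ$, and then charge each component of $G-S$ to a distinct vertex of $S$.

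First we describe the components of $G-S$. Since $K$ is a clique, all vertices of $K\setminus S_K$ (if any) lie in a single component $C_K$. Apart from $C_K$ and the component containing $v_0$ (if $v_0\notin S$), every component is a subpath of some $Q_i^\circ$ lying strictly between two consecutive vertices of $Q_i\cap S$, where $v_0$ and $v_i$ are counted as cut points whenever they lie in $S$. On the path $Q_i$, the number of segments cut out by $S$ is controlled by $|S_i|$. Deleting $k$ internal vertices from a path splits it into at most $k+1$ pieces. The two end pieces are attached to $v_0$ and $v_i$ respectively, and an end piece is separate from them only when the corresponding endpoint is in $S$.

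Next we set up the charging. \emph{Case $S_K\ne K$:} here $C_K$ exists. For each $i$, a component lying strictly inside $Q_i^\circ$ is bounded on the $v_i$-side either by a vertex of $S_i$ or by $v_i\in S_K$. We charge it to that bounding vertex on its $v_i$-side, which gives an injective assignment into $S_i\cup (\{v_i\}\cap S_K)$. It remains to pay for $C_K$ and, if $v_0\notin S$, for the component of $v_0$. If the component of $v_0$ contains some $v_j\notin S$ through an intact $Q_j$, it coincides with $C_K$, so only one extra component remains. That one extra needs a vertex of $S$ not yet charged. If every $Q_i$ were intact and $S_K=\emptyset$, then $G-S$ would be connected, contradicting that $S$ is separating. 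Otherwise we look for an uncharged vertex: either some $Q_i$ has a vertex of $S_i$ adjacent (within the remaining path) to the $v_0$-side piece, and this vertex is used for nothing else, or we adjust the charging to the $v_0$-side to free one. \emph{Case $S_K=K$:} now $|S|\ge t\ge 3$. Each $Q_i^\circ$ with $v_0\notin S$ contributes pieces, one of which is merged with $v_0$. We charge the pieces of $Q_i$ other than the one containing $v_0$'s side to $S_i$ plus $v_i$, and the single $v_0$-component to any leftover vertex. A leftover exists because $t\ge 3$ and at least one $Q_i$ contributes no extra piece beyond what its own vertices pay for. If $v_0\in S$, each $Q_i$ splits into at most $|S_i|+1$ pieces, paid for by $S_i\cup\{v_i\}$.

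The main obstacle is the bookkeeping at $v_0$. The vertex $v_0$ has degree $t$, and removing it alone does not separate $G$, but removing $v_0$ together with parts of $K$ isolates many path segments at once. To handle this, I would write down the exact count
\[
c(G-S)=[\,K\not\subseteq S\text{ or }v_0\notin S\,]+\sum_i \bigl(\#\text{pieces of }Q_i^\circ-S \text{ not attached to a surviving endpoint}\bigr),
\]
up to merging of $C_K$ with $v_0$'s component, and verify $c(G-S)\le |S|$ by summing the per-path bound $\#\text{isolated pieces of }Q_i\le |S_i|+[v_i\in S]+[v_0\in S]-1$. Here the $-1$ uses that the pieces at both ends cannot be simultaneously isolated unless both endpoints are removed, and the surplus $[v_0\in S]$ is counted only once overall. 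Checking that this sum closes, with the single global term absorbed, is where the care (and the hypothesis $t\ge 3$, $s_i\ge 1$) goes.
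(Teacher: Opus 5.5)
Your charging route differs from the paper's and can be completed, but as written it is not a proof. The one explicit inequality you commit to is false, and the key ``leftover vertex'' step is asserted rather than proved.

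\textbf{How the paper argues.} It avoids segment bookkeeping by taking an inclusion-minimal cut $S$ with $c(G-S)>|S|$. An internal vertex $z$ of $Q_i$ has degree $2$, so putting it back into $G-S$ lowers the component count by at most one. Minimality therefore forces $S\subseteq\{v_0,\dots,v_t\}$. Then $v_0\in S$ (otherwise $G-S$ is connected), and the bound $c(G-S)\le |S\cap K|+1$, or $c(G-S)\le t$ when $K\subseteq S$, is immediate. Your direct injection is a legitimate non-extremal alternative: charge each isolated segment of $Q_i^\circ-S$ to the vertex of $S$ bounding it on the $v_i$-side, which is injective. Properly finished, it works for all $t\ge 2$.

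\textbf{First gap: the per-path bound.} The bound $\#\{\text{isolated pieces of }Q_i\}\le |S_i|+[v_i\in S]+[v_0\in S]-1$ fails when $S_i=\emptyset$ and $v_0,v_i\notin S$: the left side is $0$ and the right side is $-1$. For instance, if $s_1\ge 3$ and $S$ is two nonadjacent internal vertices of $Q_1$, the summed bound is $2-t<0$, yet there is an isolated piece. These intact paths are exactly the ones that merge the component of $v_0$ with $C_K$, so they must be treated separately. Bound them by $0$ instead, and note that if $d\ge 1$ such paths exist, only one component of $G-S$ contains $v_0$ or meets $K$. Moreover $d\le t-1$ because $S\neq\emptyset$.

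\textbf{Second gap: the leftover vertex.} The clause ``or we adjust the charging to the $v_0$-side to free one'' hides the key point. You also never treat the case $v_0\notin S$, $S_K\neq K$ with $C_K$ distinct from the component of $v_0$, which needs \emph{two} free vertices. The missing observation is this. Suppose $v_0\notin S$, and let $i$ be such that $S$ meets $V(Q_i)\setminus\{v_0\}$. Let $w_i$ be the vertex of $S$ on $Q_i$ nearest to $v_0$ (possibly $w_i=v_i$). Then $w_i$ is never charged, because the segment on its $v_0$-side is attached to $v_0$ and is not isolated.

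\textbf{How this closes the argument.} The observation gives at least one free vertex when the two non-isolated components coincide, since $S\neq\emptyset$. It gives $t\ge 2$ free vertices when they do not, because then every $Q_i$ meets $S$ away from $v_0$. If $v_0\in S$, then $v_0$ is never charged and pays for $C_K$. With this, the charging closes uniformly, and your split into $S_K=K$ versus $S_K\neq K$ and the appeal to $t\ge 3$ become unnecessary.
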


\begin{proof}
Let $G=G_t(s_1,\ldots,s_t)$, and let $K=\{v_1,\ldots,v_t\}$. Suppose, for a
contradiction, that $G$ is not $1$-tough. Choose an inclusion-minimal vertex
cut $S\subseteq V(G)$ such that $c(G-S)>|S|$.

First, $S$ contains no internal vertex of any path $Q_i$. Indeed, suppose
that $z\in S$ is an internal vertex of some $Q_i$. If $|S|=1$, then
$S=\{z\}$; but deleting one internal vertex of $Q_i$ does not disconnect
$G$, since $v_0$ and $v_i$ remain connected through the clique
$\{v_1,\ldots,v_t\}$ and the other paths. Thus $|S|\ge 2$. Adding $z$ back
to $G-S$ can decrease the number of components by at most $1$, and since
$c(G-S)>|S|$, we have
\[
   c(G-(S\setminus\{z\}))\ge c(G-S)-1\ge |S|>|S|-1.
\]
In particular, $G-(S\setminus\{z\})$ is disconnected and
$c(G-(S\setminus\{z\}))>|S\setminus\{z\}|$, contradicting the minimality of
$S$. Hence $S\subseteq \{v_0,v_1,\ldots,v_t\}$.

If $v_0\notin S$, then $G-S$ is connected, since all remaining vertices are
connected through $v_0$ and the clique $K$. Thus $v_0\in S$. Let
$r=|S\cap K|$. If $r<t$, then $G-S$ has at most $r+1$ components: one
component containing $K\setminus S$, and at most one additional component for
each deleted vertex of $K$. Hence $c(G-S)\le r+1=|S|$. If $r=t$, then
$G-S$ has at most $t$ components, while $|S|=t+1$. In both cases,
$c(G-S)\le |S|$, a contradiction. Therefore $G$ is $1$-tough.
\end{proof}

\begin{proof}[Proof of Theorem \ref{thm:1-tough-not-RP}]

Now fix $t=5$, and let $s_1,\ldots,s_5$ be positive integers. Put
$G:=G_5(s_1,\ldots,s_5)$. Since $|V(G)|=6+\sum_{i=1}^5 s_i$,
by choosing $s_1,\ldots,s_5$ appropriately, we obtain such a graph on $n$
vertices for every $n\ge 11$. By Lemma~\ref{lem:Gt-1-tough}, the graph $G$
is $1$-tough. We now show that $G$ is not RN.
Suppose, for a contradiction, that $G$ is RN. By
Theorem~\ref{thm:devriendt-characterization}, there exists a positive
distribution $\mu$ on $\mathcal T(G)$ such that, if $T$ is a random spanning
tree sampled according to $\mu$, then
\[
   \mathbb E_\mu[\deg_T(v)]\le 2
\]
for every vertex $v\in V(G)$. Since every spanning tree of $G$ has $n-1$
edges, we have
\begin{equation}
\label{eqn:degree-sum}
   \mathbb E_\mu\left[\sum_{v\in V(G)}\deg_T(v)\right]=2(n-1).
\end{equation}

For each $i\in[5]$, let $x_i$ be the neighbor of $v_0$ on the path $Q_i$.
Set $X:=\{x_1,\ldots,x_5\}$. 
We first bound the expected total degree of the vertices in $X$. For
$j\in[5]$, let
\[
   A_j:=\{\deg_T(v_0)=j\}.
\]
On the event $A_j$, exactly $j$ of the edges $v_0x_i$ belong to $T$. If
$v_0x_i\in T$, then $\deg_T(x_i)\le 2$, since $x_i$ has degree $2$ in $G$.
If $v_0x_i\notin T$, then $x_i$ must be joined to the rest of $T$ through the
path $Q_i-v_0$, and hence $\deg_T(x_i)=1$. Therefore, on the event $A_j$,
\[
   \sum_{i=1}^5 \deg_T(x_i)\le 2j+(5-j)=5+j.
\]
Equivalently,
\[
   \mathbb E_\mu\left[
      \sum_{i=1}^5 \deg_T(x_i)\,\middle|\, A_j
   \right]
   \le 5+j.
\]
By the law of total expectation,
\[
\begin{aligned}
   \mathbb E_\mu\left[\sum_{i=1}^5 \deg_T(x_i)\right]
   &=
   \sum_{j=1}^5
   \mathbb E_\mu\left[
      \sum_{i=1}^5 \deg_T(x_i)\,\middle|\, A_j
   \right]\mu(A_j) \\
   &\le
   \sum_{j=1}^5 (5+j)\mu(A_j) \\
   &=
   5+\sum_{j=1}^5 j\mu(A_j) \\
   &=
   5+\mathbb E_\mu[\deg_T(v_0)] \\
   &\le 7.
\end{aligned}
\]
Since $G$ is assumed to be RN, $\mathbb E_\mu[\deg_T(v)]\le 2$ for every vertex $v\in V(G)\setminus X$. It follows
that
\[
   \mathbb E_\mu\left[\sum_{v\in V(G)}\deg_T(v)\right]
   =
   \mathbb E_\mu\left[\sum_{i=1}^5\deg_T(x_i)\right]
   +
   \mathbb E_\mu\left[\sum_{v\in V(G)\setminus X}\deg_T(v)\right]
   \le 7+2(n-5)=2n-3.
\]
This contradicts \eqref{eqn:degree-sum}. Hence $G$ is not RN.
\end{proof}

When $t=4$, the same proof shows that $G_4(s_1, \ldots, s_4)$ is not RP whenever $s_i\geq 1$ for all $i\in [4]$. 

\section{Existence of a Nontraceable RP Graph}\label{sec:RP-nontraceble}

In this section, we consider the Thomassen 34-graph $T_{34}$ (depicted in Figure \ref{figN1}) and prove Theorem~\ref{thm:RP-non-traceble}, 
restated here for convenience.

\thomassenRPNTraceable*

\begin{figure}[ht]
\centering
\resizebox{0.3\textwidth}{!}{\begin{tikzpicture}
\solidnodes
% upper left
\foreach \x in {0,1,2,3,4} {
\node (ai\x) at ([shift={(-1.5,1.9)}] 360*\x/5:1) {};
\node (ao\x) at ([shift={(-1.5,1.9)}] 360*\x/5:2) {};
\pgfmathsetmacro{\dx}{6*cos(72*\x+90)}
\pgfmathsetmacro{\dy}{6*sin(72*\x+90)}
\ifcase\x
    \def\lab{b}% x=0
  \or
    \def\lab{c}% x=1
  \or
    \def\lab{c}% x=2
  \or
    \def\lab{b}% x=3
  \or
    \def\lab{a}% x=4
  \fi
\draw[thick] (ai\x) -- node[midway,
    draw=none,xshift=\dx pt,
    yshift=\dy pt,
    fill=none,
    circle=false]
{$\lab$}(ao\x);
};

% lower left
\foreach \x in {0,1,2,3,4} {
\node (bi\x) at ([shift={(-1.5,-1.9)}] 360*\x/5:1) {};
\node (bo\x) at ([shift={(-1.5,-1.9)}] 360*\x/5:2) {};
\pgfmathsetmacro{\dx}{6*cos(72*\x+90)}
\pgfmathsetmacro{\dy}{6*sin(72*\x+90)}
\ifcase\x
    \def\lab{b}% x=0
  \or
    \def\lab{a}% x=1
  \or
    \def\lab{b}% x=2
  \or
    \def\lab{c}% x=3
  \or
    \def\lab{c}% x=4
  \fi
\draw[thick] (bi\x) -- node[midway,
    draw=none,xshift=\dx pt,
    yshift=\dy pt,
    fill=none,
    circle=false]
{$\lab$}(bo\x);
};

% upper right
\foreach \x in {0,1,2,3,4} {
\node (ci\x) at ([shift={(1.5,1.9)}] 360*\x/5+180:1) {};
\node (co\x) at ([shift={(1.5,1.9)}] 360*\x/5+180:2) {};
\pgfmathsetmacro{\dx}{6*cos(72*\x+90)}
\pgfmathsetmacro{\dy}{6*sin(72*\x+90)}
\ifcase\x
    \def\lab{}% x=0
  \or
    \def\lab{a}% x=1
  \or
    \def\lab{b}% x=2
  \or
    \def\lab{c}% x=3
  \or
    \def\lab{c}% x=4
  \fi
\draw[thick] (ci\x) -- node[midway,
    draw=none,xshift=\dx pt,
    yshift=\dy pt,
    fill=none,
    circle=false]
{$\lab$}(co\x);
};

% lower right
\foreach \x in {0,1,2,3,4} {
\node (di\x) at ([shift={(1.5,-1.9)}] 360*\x/5+180:1) {};
\node (do\x) at ([shift={(1.5,-1.9)}] 360*\x/5+180:2) {};
\pgfmathsetmacro{\dx}{6*cos(72*\x+90)}
\pgfmathsetmacro{\dy}{6*sin(72*\x+90)}
\ifcase\x
    \def\lab{}% x=0
  \or
    \def\lab{c}% x=1
  \or
    \def\lab{c}% x=2
  \or
    \def\lab{b}% x=3
  \or
    \def\lab{a}% x=4
  \fi
\draw[thick] (di\x) -- node[midway,
    draw=none,
    xshift=\dx pt,
    yshift=\dy pt,
    fill=none,
    circle=false]
{$\lab$}(do\x);
};

% corner paths
\foreach[count=\i from 0] \x in {ao,bo,co,do} {
%\draw[thick] (\x1)--(\x2)--(\x3)--(\x4);
};

\foreach \x in {ai,bi,ci,di} {
\draw[thick] (\x0)--(\x2)--(\x4)--(\x1)--(\x3)--(\x0);
};
\draw[thick] (ao1)--node[midway, draw=none, fill=none, circle=false, yshift=6 pt]{$b$}(co4);
\draw[thick] (bo4)--node[midway, draw=none, fill=none, circle=false, yshift=6 pt]{$b$}(do1);

% Remaining labels

\foreach[count=\i from 2] \x in {1,2,3}{
\pgfmathsetmacro{\dx}{6*cos(72*\x+36)}
\pgfmathsetmacro{\dy}{6*sin(72*\x+36)}
\ifcase\x
    \def\lab{}% x=0
  \or
    \def\lab{c}% x=1
  \or
    \def\lab{b}% x=2
  \or
    \def\lab{a}% x=3
  \or
    \def\lab{}% x=4
  \fi
\draw[thick] (ao\x)--node[midway,
    draw=none,
    fill=none,
    circle=false,
    xshift=\dx pt,
    yshift=\dy pt]{$\lab$}(ao\i);
}

\foreach[count=\i from 2] \x in {1,2,3}{
\pgfmathsetmacro{\dx}{6*cos(72*\x+36)}
\pgfmathsetmacro{\dy}{6*sin(72*\x+36)}
\ifcase\x
    \def\lab{}% x=0
  \or
    \def\lab{a}% x=1
  \or
    \def\lab{b}% x=2
  \or
    \def\lab{c}% x=3
  \or
    \def\lab{}% x=4
  \fi
\draw[thick] (bo\x)--node[midway,
    draw=none,
    fill=none,
    circle=false,
    xshift=\dx pt,
    yshift=\dy pt]{$\lab$}(bo\i);
}

\foreach[count=\i from 2] \x in {1,2,3}{
\pgfmathsetmacro{\dx}{-6*cos(72*\x+36)}
\pgfmathsetmacro{\dy}{-6*sin(72*\x+36)}
\ifcase\x
    \def\lab{}% x=0
  \or
    \def\lab{a}% x=1
  \or
    \def\lab{b}% x=2
  \or
    \def\lab{c}% x=3
  \or
    \def\lab{}% x=4
  \fi
\draw[thick] (co\x)--node[midway,
    draw=none,
    fill=none,
    circle=false,
    xshift=\dx pt,
    yshift=\dy pt]{$\lab$}(co\i);
}

\foreach[count=\i from 2] \x in {1,2,3}{
\pgfmathsetmacro{\dx}{-6*cos(72*\x+36)}
\pgfmathsetmacro{\dy}{-6*sin(72*\x+36)}
\ifcase\x
    \def\lab{}% x=0
  \or
    \def\lab{c}% x=1
  \or
    \def\lab{b}% x=2
  \or
    \def\lab{a}% x=3
  \or
    \def\lab{}% x=4
  \fi
\draw[thick] (do\x)--node[midway,
    draw=none,
    fill=none,
    circle=false,
    xshift=\dx pt,
    yshift=\dy pt]{$\lab$}(do\i);
}

% Stars
\foreach[count=\j from 0] \y in{ai,bi}{
    \foreach[count=\i from 0] \x in {2,3,4,0,1}{
    \pgfmathsetmacro{\dx}{4*cos(72*\i+72)}
    \pgfmathsetmacro{\dy}{4*sin(72*\i+72)}
    \ifcase\i
        \def\lab{c}% x=0
      \or
        \def\lab{c}% x=1
      \or
        \def\lab{b}% x=2
      \or
        \def\lab{c}% x=3
      \or
        \def\lab{b}% x=4
      \fi
    \draw[thick] (\y\x)--node[pos=0.8,
        draw=none,
        fill=none,
        circle=false,
        xshift=\dx pt,
        yshift=\dy pt]{$\lab$}(\y\i);
    }
}

% Stars
\foreach[count=\j from 0] \y in{ci,di}{
    \foreach[count=\i from 0] \x in {2,3,4,0,1}{
    \pgfmathsetmacro{\dx}{-4*cos(72*\i+72)}
    \pgfmathsetmacro{\dy}{-4*sin(72*\i+72)}
    \ifcase\i
        \def\lab{c}% x=0
      \or
        \def\lab{c}% x=1
      \or
        \def\lab{b}% x=2
      \or
        \def\lab{c}% x=3
      \or
        \def\lab{b}% x=4
      \fi
    \draw[thick] (\y\x)--node[pos=0.8,
        draw=none,
        fill=none,
        circle=false,
        xshift=\dx pt,
        yshift=\dy pt]{$\lab$}(\y\i);
    }
}

\end{tikzpicture}}
\caption{The graph $T_{34}$, with edge labels
$a=\dfrac{33}{68}$, $b=\dfrac{99}{136}$, and
$c=\dfrac{165}{272}$.}
\label{figN1}
\end{figure}

\begin{proof}
The graph $T_{34}$ is known to be hypotraceable~\cite{Thomassen1974}, meaning
that $T_{34}$ is not traceable, while $T_{34}-v$ is traceable for every
$v\in V(T_{34})$. In particular, $T_{34}$ is $2$-connected.
It remains to show that $T_{34}$
is RP.

Let $x\in \mathbb R^{E(T_{34})}$ be the vector whose coordinates are the edge
labels $a,b,c$ shown in Figure~\ref{figN1}. We first show that
$x\in P(T_{34})^\circ$. Since $T_{34}$ is $2$-connected, it suffices, by
Lemma~\ref{lem:treepoly-relint}, to verify that $x_e>0$ for every
$e\in E(T_{34})$, that $x(E(T_{34}))=|V(T_{34})|-1=33$, and that
$x(E[S])<|S|-1$ for every $S\subseteq V(T_{34})$ with
$2\le |S|\le 33$.

The first condition is immediate from the definitions of $a,b,c$. Also, from
Figure~\ref{figN1}, one checks that $x(E(v))=33/17$ for every
$v\in V(T_{34})$. Hence
\[
   2x(E(T_{34}))
   =\sum_{v\in V(T_{34})}x(E(v))
   =34\cdot \frac{33}{17}
   =66,
\]
so $x(E(T_{34}))=33$.

It remains to check the strict induced-subgraph inequalities. Let
$S\subsetneq V(T_{34})$ with $2\le |S|\le 33$, and let $\partial S$ be the
set of edges with exactly one endpoint in $S$. Since
$\sum_{v\in S}x(E(v))=2x(E[S])+x(\partial S)$, we have
$$2x(E[S])=\frac{33}{17}|S|-x(\partial S).$$

We claim that $x(\partial S)\ge 33/17$. If
$|V(T_{34})\setminus S|=1$, then $\partial S=E(v)$ for the unique vertex
$v\in V(T_{34})\setminus S$, and therefore
$x(\partial S)=x(E(v))=33/17$. Otherwise, both $S$ and its complement contain at least two vertices. A computer search verifies that every such cut has at least four edges; that is, $|\partial S|\ge 4$ whenever $2\le |S|\le 32$.
Since every edge has weight at least $a=33/68$, it follows that
$x(\partial S)\ge 4a=33/17$.
Therefore
\[
   2x(E[S])
   \le \frac{33}{17}|S|-\frac{33}{17}
   = \frac{33}{17}(|S|-1),
\]
and hence $x(E[S])\le \frac{33}{34}(|S|-1)<|S|-1$. Thus
$x\in P(T_{34})^\circ$.

Since $x\in P(T_{34})^\circ$, there exist coefficients $\lambda_T>0$, one for
each spanning tree $T\in\mathcal T(T_{34})$, such that
$\sum_{T\in\mathcal T(T_{34})}\lambda_T=1$ and
$x=\sum_{T\in\mathcal T(T_{34})}\lambda_T\e_T$. Define a positive distribution
$\mu$ on $\mathcal T(T_{34})$ by $\mu(T)=\lambda_T$. Then the edge marginal
of each edge $e$ under $\mu$ is $x_e$.
Let $T$ be a random spanning tree sampled according to $\mu$. For every
vertex $v\in V(T_{34})$,
\[
   \mathbb E_\mu[\deg_T(v)]
   =
   \sum_{e\in E(v)}\Pr_\mu(e\in T)
   =
   \sum_{e\in E(v)}x_e
   =
   x(E(v))
   =
   \frac{33}{17}
   <2.
\]
By Theorem~\ref{thm:devriendt-characterization}, the graph $T_{34}$ is RP.
This completes the proof.
\end{proof}

%%%%%%%%%%%%%%%%%%%%%%%%%%%%% End of Section 4

%%%%%%%%%%%%%%%%%%%%%%%%%%%%Beginning of Section 5
\section{Sprawling Graphs}\label{sec:sprawling}

We now seek sufficient conditions for RN and RP graphs, with the aim of
classifying Cartesian products of paths. Recall from the introduction that a graph $G=(V,E)$ is called \emph{sprawling} if there exists a collection
$\S=\{H_1,\ldots,H_q\}$ of Hamiltonian paths of $G$ satisfying the following
two conditions:
\begin{enumerate}
    \item every edge of $G$ is contained in at least one path in $\S$;
    \item for every set $U\subsetneq V$ with $2\le |U|\le |V|-1$ such that
    $G[U]$ is connected, there exists $H_i\in\S$ such that $H_i[U]$ is not a
    spanning tree of $G[U]$.
\end{enumerate}
We call such a collection $\S$ a \emph{sprawling set}.
Figure~\ref{fig:example-non-example-sprawling-graph} shows two examples of
sprawling graphs and one non-example. For the $3$-cycle in
Figure~\ref{fig:sprawling-ex-1}, its three Hamiltonian paths form a
sprawling set. The Petersen graph with edges labeled as in
Figure~\ref{fig:sprawling-ex-2} admits the following sprawling set:
\[
\begin{aligned}
H_1&=v_6v_8v_{10}v_7v_9v_4v_5v_1v_2v_3,
&\qquad
H_2&=v_7v_2v_1v_6v_9v_4v_3v_8v_{10}v_5,\\
H_3&=v_8v_6v_9v_7v_{10}v_5v_4v_3v_2v_1,
&\qquad
H_4&=v_6v_1v_5v_4v_9v_7v_{10}v_8v_3v_2.
\end{aligned}
\]
The graph in Figure~\ref{fig:sprawling-non-ex-1} is traceable but not
sprawling, since the three edges $e_1, e_2, e_3$ are contained in every
Hamiltonian path of the graph.

\begin{figure}[ht]
    \centering
    \resizebox{0.70\textwidth}{!}{%
        \begin{minipage}{\textwidth}
            \centering
            %% bowtie
\begin{subfigure}[t]{.3\textwidth}
\centering
\begin{tikzpicture}[scale=1.5]
\solidnodes

\node (1) at (0,{sqrt(3)+2}) {};
\node (2) at (-1,2) {};
\node (3) at (1,2) {};

\draw[thick]
  (1) -- node[left, fill=none, draw=none] {} (2)
  -- node[below, fill=none, draw=none] {} (3)
  -- node[right, fill=none, draw=none] {} (1);

\end{tikzpicture}
\caption{}
\label{fig:sprawling-ex-1}
\end{subfigure}
%%%%%%%%%%%Figure 5B
\begin{subfigure}[t]{.33\textwidth}
\centering
\begin{tikzpicture}
\solidnodes

% Vertices with vertex labels
\foreach \x in {0,1,2,3,4}{
    \pgfmathtruncatemacro{\olab}{\x+1}
    \pgfmathtruncatemacro{\ilab}{\x+6}

    \node (i\x) at (90+360*\x/5:1.25) {};
    \node[draw=none, fill=none, shape=rectangle, font=\normalsize, inner sep=1pt]
        at (100+360*\x/5:1.5) {$v_{\ilab}$};

    \node (o\x) at (90+360*\x/5:2.65) {};
    \node[draw=none, fill=none, shape=rectangle, font=\normalsize, inner sep=1pt]
        at (90+360*\x/5:3.0) {$v_{\olab}$};
}

% Spokes (labels 1--5)
\foreach \x in {0,1,2,3,4}{
    \pgfmathsetmacro{\dx}{6*cos(72*\x+90)}
    \pgfmathsetmacro{\dy}{6*sin(72*\x+90)}

    \draw[thick]
    (i\x) --
    node[midway,
         draw=none,
         fill=none,
         circle=false,
         xshift=\dx pt,
         yshift=\dy pt]
    {}
    (o\x);
}

% Outer pentagon (labels 6--10)
\foreach \x/\y/\lab in {
0/1/6,
1/2/7,
2/3/8,
3/4/9,
4/0/10
}{
    \pgfmathsetmacro{\ang}{72*\x+126}
    \pgfmathsetmacro{\dx}{6*cos(\ang)}
    \pgfmathsetmacro{\dy}{6*sin(\ang)}

    \draw[thick]
    (o\x) --
    node[midway,
         draw=none,
         fill=none,
         circle=false,
         xshift=\dx pt,
         yshift=\dy pt]
    {}
    (o\y);
}

% Inner star (labels 11--15)
\foreach \a/\b/\lab/\k in {
0/2/11/0,
2/4/12/1,
4/1/13/2,
1/3/14/3,
3/0/15/4
}{
    \pgfmathsetmacro{\dx}{4*cos(72*\k+72)}
    \pgfmathsetmacro{\dy}{4*sin(72*\k+72)}

    \draw[thick]
    (i\a) --
    node[pos=0.8,
         draw=none,
         fill=none,
         circle=false,
         xshift=\dx pt,
         yshift=\dy pt]
    {}
    (i\b);
}

\end{tikzpicture}
\caption{}
\label{fig:sprawling-ex-2}
\end{subfigure}
%%%Figure 5(C)
\begin{subfigure}[t]{.33\textwidth}
\centering
\begin{tikzpicture}
\solidnodes

% Vertices
\node (v) at (3,0) {};

\foreach \x in {-2,0,2}{
    \node (\x) at (0,\x) {};
    \node (a\x) at (1,0.9*\x) {};
    \node (b\x) at (2,0.6*\x) {};
}

% Top path
\draw[thick]
(2) --
node[midway,
     draw=none,
     fill=none,
     circle=false,
     yshift=6pt] {}
(a2) --
node[midway,
     draw=none,
     fill=none,
     circle=false,
     yshift=6pt] {}
(b2) --
node[midway,
     draw=none,
     fill=none,
     circle=false,
     yshift=6pt] {}
(v);

% Middle path
\draw[thick]
(0) --
node[midway,
     draw=none,
     fill=none,
     circle=false,
     yshift=6pt] {}
(a0) --
node[midway,
     draw=none,
     fill=none,
     circle=false,
     yshift=6pt] {}
(b0) --
node[midway,
     draw=none,
     fill=none,
     circle=false,
     yshift=6pt] {}
(v);

% Bottom path
\draw[thick]
(-2) --
node[midway,
     draw=none,
     fill=none,
     circle=false,
     yshift=-6pt] {}
(a-2) --
node[midway,
     draw=none,
     fill=none,
     circle=false,
     yshift=-6pt] {}
(b-2) --
node[midway,
     draw=none,
     fill=none,
     circle=false,
     yshift=-6pt] {}
(v);

% Vertical edges
\draw[thick]
(2) --
node[midway,
     draw=none,
     fill=none,
     circle=false,
     xshift=-8pt] {}
(0) --
node[midway,
     draw=none,
     fill=none,
     circle=false,
     xshift=-8pt] {}
(-2);

% Curved edge
\draw[thick]
(-2)
to[bend left=30]
node[midway,
     draw=none,
     fill=none,
     circle=false,
     xshift=-12pt] {}
(2);

% Red highlighted edges
\draw[very thick,red]
(a2) --
node[midway,
     draw=none,
     fill=none,
     circle=false,
     yshift=6pt] {$e_1$}
(b2);

\draw[very thick,red]
(a0) --
node[midway,
     draw=none,
     fill=none,
     circle=false,
     yshift=6pt] {$e_2$}
(b0);

\draw[very thick,red]
(a-2) --
node[midway,
     draw=none,
     fill=none,
     circle=false,
     yshift=-6pt] {$e_3$}
(b-2);

\end{tikzpicture}
\caption{}
\label{fig:sprawling-non-ex-1}
\end{subfigure}
        \end{minipage}
    }
   \caption{Two examples and one non-example of sprawling graphs.}
    \label{fig:example-non-example-sprawling-graph}
\end{figure}

We first prove Theorem~\ref{thm:sprawling-RN}, which we restate here for
convenience.

\sprawlingRN*

\begin{proof}
Let $\S=\{H_1,\ldots,H_q\}$ be a sprawling set for $G$, and define
$x=\frac{1}{q}\sum_{i=1}^q \e_{H_i}$. Since each $H_i$ is a Hamiltonian path,
we have $\e_{H_i}\in P(G)\cap 2M(G)$, and hence $x\in P(G)\cap 2M(G)$.

We claim that $x\in P(G)^\circ$. By the first condition in the definition of
a sprawling set, every edge of $G$ is contained in at least one path in $\S$.
Therefore $x_e>0$ for every $e\in E(G)$. Also, since each $H_i$ is a
spanning tree, $\mathbf 1\cdot x=|V(G)|-1$.

It remains to check the strict induced-subgraph inequalities. Let
$U\subsetneq V(G)$ with $2\le |U|\le |V(G)|-1$. If $G[U]$ is disconnected,
then $H_i[U]$ is disconnected for every $i$, and so
$|E(H_i[U])|\le |U|-2$ for every $i$. Hence $x(E[U])<|U|-1$.

Now suppose that $G[U]$ is connected. By the second condition in the
definition of a sprawling set, there exists $H_j\in\S$ such that $H_j[U]$ is
not a spanning tree of $G[U]$. Since $H_j[U]$ is a forest on the vertex set
$U$, this gives $|E(H_j[U])|\le |U|-2$. For every other $i$, we have
$|E(H_i[U])|\le |U|-1$. Therefore
\[
   x(E[U])
   =
   \frac{1}{q}\sum_{i=1}^q |E(H_i[U])|
   <
   |U|-1.
\]
Thus $x$ satisfies strictly all nontrivial inequalities in the spanning-tree
polytope description from Lemma~\ref{lem:treepoly-char}, together with
$x_e>0$ for every edge $e$. By the standard relative-interior criterion for a
polytope described by valid inequalities, this implies that
$x\in P(G)^\circ$.
Since $x\in 2M(G)$ as well, we have
$P(G)^\circ\cap 2M(G)\neq \emptyset$. By
Theorem~\ref{thm:devriendt-characterization}, $G$ is RN.

Finally, by \cite[Proposition~3.7]{Devriendt2025}, the only RN graphs that
are not $2$-connected are paths. A path with at least three vertices is not
sprawling, since its unique Hamiltonian path restricts to a spanning tree on
every proper subpath. Thus, if $|V(G)|\ge 3$, the graph $G$ is
$2$-connected.
\end{proof}

The same approach gives a sufficient conditionwhenever the corresponding neighbor exists for RP.

\begin{proposition}\label{prop:sprawling-path-endpoint-RP}
Let $\S=\{H_1,\ldots,H_q\}$ be a sprawling set of a graph $G=(V,E)$.
Suppose that for every vertex $v\in V$, there exists $H_i\in\S$ with $v$ as
an endpoint. Then $G$ is RP.
\end{proposition}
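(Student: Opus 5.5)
The plan is to reuse the point from the proof of Theorem~\ref{thm:sprawling-RN}, namely $x=\frac1q\sum_{i=1}^q \e_{H_i}$, and show that it also satisfies the strict vertex inequalities. Then Theorem~\ref{thm:devriendt-characterization} gives RP. In probabilistic language, $x$ corresponds to the uniform distribution on the spanning trees $H_1,\dots,H_q$, but this distribution is not positive. So I will instead work with the point $x$ and Theorem~\ref{thm:dimension}, or equivalently with the reformulation inside its proof: $G$ is RP if and only if some point $y\in P(G)^\circ$ satisfies $y(E(v))<2$ for all $v$.

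\emph{Step 1 ($x\in P(G)^\circ$).} This is exactly what the proof of Theorem~\ref{thm:sprawling-RN} establishes. Every edge is covered, so $x_e>0$; each $H_i$ is a spanning tree, so $\mathbf 1\cdot x=|V|-1$; and the second sprawling condition (with the disconnected case handled separately) gives $x(E[U])<|U|-1$ for all $U$ with $2\le|U|\le|V|-1$. The graph is $2$-connected by Theorem~\ref{thm:sprawling-RN}, so Lemma~\ref{lem:treepoly-relint} applies. Here I assume $|V|\ge3$. For $|V|\le 2$ the statement is degenerate ($K_2$ has each curvature $1/2>0$, and a single vertex is trivial), and I would dispose of it directly.

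\emph{Step 2 (strict degree bound).} For each vertex $v$ and each $i$, $\deg_{H_i}(v)\in\{1,2\}$ because $H_i$ is a Hamiltonian path. By hypothesis there is some $i_v$ with $v$ an endpoint of $H_{i_v}$, so $\deg_{H_{i_v}}(v)=1$. Hence
\[
x(E(v))=\frac1q\sum_{i=1}^q\deg_{H_i}(v)\le \frac{2(q-1)+1}{q}<2 .
\]

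\emph{Step 3 (conclude).} By Steps 1 and 2, $x\in P(G)^\circ$ and $x(E(v))<2$ for every $v$. As in the first part of the proof of Theorem~\ref{thm:dimension}, writing $x$ as a convex combination of all spanning trees with strictly positive coefficients gives a positive distribution on $\mathcal T(G)$ whose expected degrees are $x(E(v))<2$. Theorem~\ref{thm:devriendt-characterization}(1) then shows that $G$ is RP. Alternatively, a small neighbourhood of $x$ in $\operatorname{aff}P(G)$ lies in $\Theta(G)$, so $\dim\Theta(G)=|E|-1$ and Theorem~\ref{thm:dimension} applies.

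There is no real obstacle here. The only point needing care is that $x$ must lie in the relative interior of $P(G)$, not merely in $P(G)$, so that positivity of the distribution is available. This is already handled by the sprawling conditions via Theorem~\ref{thm:sprawling-RN}, together with the small-graph edge cases.
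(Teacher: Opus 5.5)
Your proposal is correct and is essentially the paper's proof. The paper takes the same averaged point $x=\frac1q\sum_i \e_{H_i}$, reuses the proof of Theorem~\ref{thm:sprawling-RN} to get $x\in P(G)^\circ\cap 2M(G)$, uses the endpoint hypothesis to force $x(E(v))<2$ at every vertex, and concludes RP from Theorem~\ref{thm:devriendt-characterization}; your explicit passage from $x\in P(G)^\circ$ to a positive distribution and your handling of the cases $|V|\le 2$ only make explicit what the paper leaves implicit.
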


\begin{proof}
Let $x=\frac{1}{q}\sum_{i=1}^q\e_{H_i}$. As in the proof of
Theorem~\ref{thm:sprawling-RN}, we have $x\in P(G)^\circ\cap 2M(G)$.
Moreover, for every $v\in V$, each Hamiltonian path $H_i$ contributes degree
at most $2$ at $v$, and at least one path in the collection contributes
degree $1$ at $v$. Therefore
\[
   x(E(v))
   =
   \frac{1}{q}\sum_{i=1}^q \deg_{H_i}(v)
   <2
\]
for every $v\in V$. By Theorem~\ref{thm:devriendt-characterization}, $G$ is
RP.
\end{proof}

We remark that the sprawling property is not preserved under adding edges. Let $G$ be the
graph obtained from $K_{2,3}$, with bipartition
$\{u,v\}\cup\{a,b,c\}$, by adding the edge $uv$. The edge $uv$ is contained
in no Hamiltonian path of $G$, so $G$ is not sprawling. However, $K_{2,3}$ is sprawling by taking the set of all Hamiltonian paths as the sprawling set.

We now apply the sprawling criterion to Cartesian products of paths. Let
$P_n$ denote the path on $n$ vertices. We now prove
Theorem~\ref{thm:grid-RN}, which we restate here for convenience. 

\gridsprawlingRN*

\begin{proof}[Proof of Theorem~\ref{thm:grid-RN}]
Let $G=P_{n_1}\times\cdots\times P_{n_d}$, where
$d,n_1,\ldots,n_d$ are positive integers. If at most one factor has more than
one vertex, then $G$ is a path, and hence is RN, as shown by
Devriendt~\cite{Devriendt2025}. Otherwise, after removing all trivial factors
$P_1$, we may assume that $d\ge 2$ and $n_i\ge 2$ for every $i\in[d]$. Each vertex of $G$ is identified as a $d$-tuple $(x_1,\ldots,x_d)$ with $x_i\in [n_i]$. 

We recursively construct a family $\S_G$ of Hamiltonian paths of $G$ such
that every edge of $G$ is contained in some path in $\S_G$. For a single path
$P_n$, let $\S_{P_n}$ consist of the unique Hamiltonian path of $P_n$. Now
suppose $G=P_{n_1}\times\cdots\times P_{n_d}$ with $d\ge 2$, and assume by
induction that the families have already been constructed for products with
$d-1$ factors in such a way that every edge is contained in some Hamiltonian
path of the corresponding family.

Fix $r\in[d]$, and define
$G_{\widehat r}:=\prod_{j\in[d]\setminus\{r\}}P_{n_j}$. We regard a vertex
$\z\in V(G_{\widehat r})$ as an indexed tuple
$\z=(z_j)_{j\in[d]\setminus\{r\}}$. For $i\in[n_r]$, write
$(\z;i)_r$ for the vertex $\x\in V(G)$ defined by $x_r=i$ and
$x_j=z_j$ for every $j\neq r$.
An \emph{$r$-line} of $G$ is a maximal path along which only the $r$th
coordinate varies. Thus the $r$-line over a vertex
$\z\in V(G_{\widehat r})$ is the path with vertex set
$\{(\z;i)_r:i\in[n_r]\}$, whose endpoints are $(\z;1)_r$ and $(\z;n_r)_r$.

For $H\in\S_{G_{\widehat r}}$, choose an orientation of $H$, and let
$\x^{(1)},\x^{(2)},\ldots,\x^{(m)}$ be the vertices of $H$ in this order.
The superscript indexes the order along $H$. We define two Hamiltonian paths
$H_r^+$ and $H_r^-$ of $G$ as follows. The path $H_r^+$ traverses the
$r$-line over $\x^{(1)}$ from $(\x^{(1)};1)_r$ to
$(\x^{(1)};n_r)_r$, then moves to $(\x^{(2)};n_r)_r$, traverses the
$r$-line over $\x^{(2)}$ from $(\x^{(2)};n_r)_r$ to
$(\x^{(2)};1)_r$, and continues in this alternating way until every
$r$-line over a vertex of $H$ has been traversed. The path $H_r^-$ is
defined similarly, except that it starts by traversing the $r$-line over
$\x^{(1)}$ from $(\x^{(1)};n_r)_r$ to $(\x^{(1)};1)_r$.

Since consecutive vertices of $H$ are adjacent in $G_{\widehat r}$, the end
of each traversed $r$-line is adjacent to the beginning of the next one.
Moreover, for this fixed $r$, the $r$-lines partition $V(G)$, and each
$r$-line is traversed exactly once. Hence $H_r^+$ and $H_r^-$ are
Hamiltonian paths of $G$.
Now let
\[
   \S_G
   :=
   \bigcup_{r\in[d]}
   \{H_r^+,H_r^- \st H\in\S_{G_{\widehat r}}\},
\]
where, for each fixed $r$ and $H\in\S_{G_{\widehat r}}$, the paths
$H_r^+$ and $H_r^-$ are the two paths constructed above from $H$.
We illustrate the construction of $\S_{P_3\times P_3}$ in
Figure~\ref{fig:grid}. 
\begin{figure}[ht]
\centering
\resizebox{0.9\textwidth}{!}{
      \begin{tikzpicture}[scale=1.1,
      vertex/.style={circle,fill=black,inner sep=1.5pt},
      grid/.style={black},
      path/.style={very thick}
    ]

    % --- base grid (P_3 x P_3) ---
    \newcommand{\GridThree}{%
      \foreach \x in {1,2,3}
      \foreach \y in {1,2,3}
      \node[vertex] (v\x\y) at (\x,\y) {};
      \foreach \x in {1,2}
      \foreach \y in {1,2,3}
      \draw[grid] (v\x\y) -- (v\the\numexpr\x+1\relax\y);
      \foreach \x in {1,2,3}
      \foreach \y in {1,2}
      \draw[grid] (v\x\y) -- (v\x\the\numexpr\y+1\relax);
    }

    % --- T1 (orange; vertical, start top-left) ---
    \begin{scope}[xshift=8cm]
      \GridThree
      \draw[path,orange]
      (v13)--(v12)--(v11)--
      (v21)--(v22)--(v23)--
      (v33)--(v32)--(v31);
      \node at (2,0.4) {$H^-_2$};
    \end{scope}

    % --- T2 (blue; vertical, start bottom-right) ---
    \begin{scope}[xshift=12cm]
      \GridThree
      \draw[path,blue]
      (v11)--(v12)--(v13)--
      (v23)--(v22)--(v21)--
      (v31)--(v32)--(v33);
      \node at (2,0.4) {$H^+_2$};
    \end{scope}

    % --- T3 (green; horizontal, start top-left) ---
    \begin{scope}[xshift=0cm]
      \GridThree
      \draw[path,green!60!black]
      (v13)--(v23)--(v33)--
      (v32)--(v22)--(v12)--
      (v11)--(v21)--(v31);
      \node at (2,0.4) {$H^-_1$};
    \end{scope}

    % --- T4 (red; horizontal, start bottom-right) ---
    \begin{scope}[xshift=4cm]
      \GridThree
      \draw[path,red]
      (v33)--(v23)--(v13)--
      (v12)--(v22)--(v32)--
      (v31)--(v21)--(v11);
      \node at (2,0.4) {$H^+_1$};
    \end{scope}

  \end{tikzpicture}
}
\caption{The set $\S_{P_3\times P_3}$. The vertex
identified with $(i,j)$ is represented by the point with coordinate $(i,j)$, with the first coordinate
increasing from left to right and the second coordinate increasing from
bottom to top.}
\label{fig:grid}
\end{figure}

We claim that $\S_G$ is a sprawling set for $G$. First, $\S_G$ covers every
edge of $G$. Indeed, let $e=\mathbf{uv}$ be an edge of $G$. Then there is a unique
coordinate $r\in[d]$ such that $u_j=v_j$ for all $j\neq r$ and
$|u_r-v_r|=1$. 
In other words, $e$ lies on an $r$-line of $G$. For any
$H\in\S_{G_{\widehat r}}$, the path $H_r^+$ traverses every $r$-line
completely, and hence contains $e$.

It remains to verify the induced-subgraph condition. Let
$U\subsetneq V(G)$ with $2\le |U|\le |V(G)|-1$, and suppose that $G[U]$ is
connected. Suppose, for a contradiction, that $R[U]$ is a spanning tree of
$G[U]$ for every $R\in\S_G$. In particular, $R[U]$ is connected for every
$R\in\S_G$.
We use the following elementary fact about paths. If $R$ is a path and
$R[U]$ is connected, then for any two vertices $\x,\y\in U$, the whole
$\x$--$\y$ subpath of $R$ has all its vertices in $U$.

We now show that $U$ is closed under adjacency. Let $\x\in U$. Since $G[U]$
is connected and has at least two vertices, there is a vertex $\y\in U$
adjacent to $\x$. Let $r$ be the unique coordinate in which $\x$ and $\y$
differ.
Fix a coordinate $s\neq r$. Delete the $s$th coordinate from $\x$ and $\y$,
and let $\overline{\x}$ and $\overline{\y}$ be their images in
$G_{\widehat s}$. Since $\x$ and $\y$ differ in coordinate $r\neq s$, the
vertices $\overline{\x}$ and $\overline{\y}$ are adjacent in
$G_{\widehat s}$. By the induction hypothesis, some path
$H\in\S_{G_{\widehat s}}$ contains the edge
$\overline{\x}\,\overline{\y}$. Consider the two paths
$H_s^+$ and $H_s^-$ in $\S_G$ constructed from this path $H$ by extending in
coordinate $s$.
In one of these two paths, the subpath from $\x$ to $\y$ runs through one end
of the two $s$-lines through $\x$ and $\y$; in the other path, the
corresponding subpath runs through the other end. Since both
$H_s^+[U]$ and $H_s^-[U]$ are connected, the elementary fact above implies
that both subpaths have all their vertices in $U$. Together these two
subpaths contain the full $s$-lines through $\x$ and $\y$. Hence, for every
$s\neq r$, the full $s$-line through $\x$ is contained in $U$.
It remains to see that the full $r$-line through $\x$ is also contained in
$U$. Choose some coordinate $s\neq r$. Since $n_s\ge 2$, the full $s$-line
through $\x$ contains a neighbor $\w\in U$ of $\x$ in coordinate direction
$s$. Applying the previous paragraph to the edge $\x\w$, with coordinate
$r$ in place of $s$, shows that the full $r$-line through $\x$ is contained
in $U$.

Thus every coordinate line through the arbitrary vertex $\x\in U$ is
contained in $U$. In particular, every neighbor of $\x$ in $G$ lies in $U$.
Since $\x$ was arbitrary, $U$ is closed under adjacency. Because $G$ is
connected and $U$ is nonempty, this implies $U=V(G)$, contradicting that
$U$ is proper.

Hence there exists some $R\in\S_G$ such that $R[U]$ is not a spanning tree
of $G[U]$. Thus $\S_G$ is a sprawling set, and so $G$ is sprawling. By
Theorem~\ref{thm:sprawling-RN}, the graph $G$ is RN.
\end{proof}

\begin{corollary}\label{cor:path-product-RP-classification}
Let $d\ge 2$ and $n_1,\ldots,n_d\ge 2$. Then
$P_{n_1}\times\cdots\times P_{n_d}$ is RP if and only if at least one
$n_i$ is even. If all $n_i$ are odd, then the product is SRN.
\end{corollary}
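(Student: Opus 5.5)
Both directions rest on Theorem~\ref{thm:devriendt-characterization}, and the products are already known to be RN by Theorem~\ref{thm:grid-RN}. So it suffices to show two things: the product is RP when some $n_i$ is even, and it fails to be RP when all $n_i$ are odd. The second claim, combined with RN, then gives SRN.

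For the \emph{even case}, say $n_r$ is even, I would show that $G=P_{n_1}\times\cdots\times P_{n_d}$ is Hamiltonian and then invoke Devriendt's result that Hamiltonian graphs are RP. To build the cycle, take a Hamiltonian path $H$ of $G_{\widehat r}$; this is a product of at least one nontrivial path, so it is traceable, for example via the snake construction $H_r^{\pm}$ from the proof of Theorem~\ref{thm:grid-RN} applied recursively. Let $\x^{(1)},\dots,\x^{(m)}$ be the vertices of $H$ in order. Traverse $P_{n_r}\times H$ as follows. Start at $(\x^{(1)};1)_r$ and go up the line to $(\x^{(1)};n_r)_r$. Then snake along $H$ using only levels $2,\dots,n_r$ of the $r$-coordinate: since the block $\{2,\dots,n_r\}$ has $n_r-1$ levels, this is the standard rectangle snake. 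Finally return along level $1$ through $\x^{(m)},\dots,\x^{(2)}$ and close back to $(\x^{(1)};1)_r$.

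For this to close up, the snake must end at $(\x^{(m)};2)_r$. Rather than snake through $P_{n_r-1}\times H$ directly, I would phrase it as the classical fact that $P_a\times P_b$ with $ab$ even is Hamiltonian, applied to $P_{n_r}\times P_m$ with $m=|V(G_{\widehat r})|$ and the path $H$ playing the role of $P_m$. Since $P_{n_r}\times H\subseteq G$ is a spanning subgraph isomorphic to $P_{n_r}\times P_m$ and $n_r$ is even, $G$ is Hamiltonian. This reduction avoids the parity bookkeeping altogether.

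For the \emph{odd case}, $G$ is bipartite with color classes $A,B$ under the parity of the coordinate sum, and $|A|=|B|+1$ because $|V|=\prod n_i$ is odd. Suppose $G$ were RP. Then Theorem~\ref{thm:devriendt-characterization} gives a positive distribution $\mu$ on spanning trees with $\mathbb E_\mu[\deg_T(v)]<2$ for all $v$. Every edge has exactly one endpoint in $B$, so summing over $B$ gives
\[
\mathbb E_\mu\Bigl[\sum_{v\in B}\deg_T(v)\Bigr]=|V|-1=2|B|.
\]
On the other hand, the same sum is strictly less than $2|B|$, a contradiction. Hence $G$ is not RP, and by Theorem~\ref{thm:grid-RN} it is SRN.

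The main obstacle is only the even-case Hamiltonicity, which I would settle by the spanning $P_{n_r}\times P_m$ observation. The odd case is a one-line counting argument.
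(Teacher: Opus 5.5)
Your proposal is correct and follows the paper's proof. Hamiltonicity gives RP when some $n_i$ is even; you justify it via the spanning subgraph $P_{n_r}\times H\cong P_{n_r}\times P_m$, where the paper simply asserts it. Bipartite imbalance rules out RP when all $n_i$ are odd, and your degree count over $B$ directly proves the obstruction the paper cites as Devriendt's Proposition~3.5.

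One small correction: $|V|$ odd only gives $|A|\neq|B|$, not $|A|=|B|+1$; the paper gets the exact difference from $\prod_i(|A_i|-|B_i|)=1$. However, your argument only needs $|V|-1\ge 2|B|$ with $B$ taken as the smaller class, so it goes through unchanged.
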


\begin{proof}
Let $G=P_{n_1}\times\cdots\times P_{n_d}$. If at least one $n_i$ is even,
then $G$ is Hamiltonian, and hence is RP by
\cite[Theorem~6.2]{Devriendt2025}.
Now suppose that $n_1,\ldots,n_d$ are all odd. By
Theorem~\ref{thm:grid-RN}, the graph $G$ is RN. For each $i\in[d]$, let
$(A_i,B_i)$ be the bipartition of $P_{n_i}$ chosen so that
$|A_i|=(n_i+1)/2$ and $|B_i|=(n_i-1)/2$. Let $X$ be the bipartition class of
$G$ consisting of the vertices $(v_1,\ldots,v_d)$ for which $v_i\in B_i$
for an even number of indices $i$, and let $Y$ be the other bipartition
class.
For each $S\subseteq[d]$, the number of vertices for which $v_i\in B_i$
exactly when $i\in S$ is
$\prod_{i\in S}|B_i|\prod_{i\notin S}|A_i|$. Therefore
\[
   |X|-|Y|
   =
   \sum_{S\subseteq[d]}(-1)^{|S|}
   \prod_{i\in S}|B_i|
   \prod_{i\notin S}|A_i|
   =
   \prod_{i=1}^d\bigl(|A_i|-|B_i|\bigr)
   =
   1.
\]
Thus the two bipartition classes of $G$ have unequal sizes. By
\cite[Proposition~3.5]{Devriendt2025}, the graph $G$ is not RP. Hence $G$ is
RN but not RP, and is therefore SRN.
\end{proof}

\section*{Acknowledgments}    
The research was primarily conducted during a Vertically Integrated Research course at the Department of Mathematics in Louisiana State University, affiliated with the Research Training Grant NSF DMS-2231492.
The authors thank Kataria Stewart and Alexander Paul Torres for early discussions on the topic, and thank Xiaonan Liu for her insights and contributions to the proof of Theorem \ref{thm:1-tough-not-RP}. Agrahari and Wang's research are supported in part by LA Board of Regents grant LEQSF(2024-27)-RD-A-16, and Bibby's research is supported by NSF DMS-2204299. We also acknowledge the assistance of ChatGPT in editing the exposition
and in generating a linear program that aided a computer search used to verify
that the Thomassen $34$-vertex graph is RP in
Theorem~\ref{thm:RP-non-traceble}.

\printbibliography

\end{document}